\newtheorem{theorem}{Theorem}[section]
\newtheorem{lemma}[theorem]{Lemma}
\newtheorem{definition}[theorem]{Definition}
\newtheorem{example}[theorem]{Example}
\newtheorem{proposition}[theorem]{Proposition}
\newtheorem{remark}[theorem]{Remark}
\begin{document}

\title{Typed Topology And Its Application To Data Sets}


\author{Wanjun Hu }
\address{Department of Math, CS And Physics\\ Albany State University\\ 504 College Dr., Albany GA 31705}
\curraddr{}
\email{whu@asurams.edu}
\thanks{}

\subjclass[2020]{Primary 05A05, 68P01}


\dedicatory{To the memory of W.W. Comfort}

\commby{}

\begin{abstract}
 The concept of $typed$ $topology$ is introduced. In a typed topological space, some open sets are assigned "types", and topological concepts such as closure, connectedness can be defined using types. A finite data set in $R^2$ is a typically typed topological space. Clusters calculated by the DBSCAN algorithm for data clustering can be well represent in a finite typed topological space. Other concepts such as tracks, port (starting points),  type-p-connectedness, p-closure-connectedness, indexing, branches are also introduced for finite typed topological spaces. Finally, $left-r$ and $up-left-r$ type open sets are introduced for finite data sets in $R^2$, so that tracks, port, branches can be calculated.    
\end{abstract}

\maketitle

\section{Introduction}
Given a set $X$, a topology $\mathcal T\subseteq P(X)$
 on $X$ is a family of subsets, called $open$ $subsets$, that
 satisfy the conditions: (1) 
$\emptyset\in \mathcal T$; (2) $X\in\mathcal T$; (3) 
If $U,V\in\mathcal T$, then $U\cap V\in\mathcal T$; 
(4) If $\mathcal U\subset\mathcal T$, 
then $\bigcup\{U: U\in\mathcal U\}\in \mathcal T$.
   For any point $x\in X$, the 
neighborhood system of $x$ is the subfamily 
$\mathcal U(x)\subseteq \mathcal T$ such that 
$x\in U$ for any $U\in \mathcal U(x)$.  
    The neighborhood system helps define the concepts of $closure$, $connectedness$ and variety of other concepts in general topology. For more information about general topology, the reader is referred to \cite{engel}.

From 1920s to 1980s, there was huge development in general topology. Many concepts and topological invariants were established. During that period of time, the concept of "neighborhood"  attracted a lot interesting in other areas. For instance, in 1936, the "Topological Psychology"  \cite{lewin} was proposed, in which an individual's behavior is connected to his/her environment, which can be conceived as the "convergence" in some sense. 

In Data Mining, the concept of neighborhood has been popular too. One major question in data mining is to find clusters inside a data set, which can be conceived as data values that are $close$ to each other. In 1951, Fix et al \cite{fix}, studied the K-Nearest Neighbors algorithm (KNN) (see also \cite{cover}). In 1967, J. Macqueen \cite{macqueen} studied the K-mean algorithm for classification and clustering, which is based on how $closer$ a point is to a cluster. See also \cite{k-means1}, \cite{k-means2} and \cite{k-means3}.
   Around that time, clustering problem was investigated \cite{sJohnson}, \cite{Harding}, \cite{hartigan}, \cite{Jardine2}, in Marketing \cite{green},  in Archaeology \cite{Hodsome},  in Biology \cite{k-means3},\cite{Jardine},\cite{jardine3},  in Information System  \cite{short}, and   in Psychology\cite{sJohnson} \cite{Hubert}. In 1996, Ester et al \cite{dbscann} studied the density-based algorithm for discovering clusters and proposed the celebrated Density-Based Spatial Clustering Of Applications With Noise (DBSCAN) algorithm. For a good reference to data clustering, the reader is referred to the ASA-SIAM Series book \cite{dataclusteringBook}.
 
Above mentioned three classical data clustering algorithms utilize the concepts of neighborhood and metric, which are typical general topological concepts. For finite data sets inside an Euclidean space, they are quantitative, and carry several metrics, each of which can benefit particular data clustering algorithms. A list of those metrics can be found in \cite{dataclusteringBook}. 
  However, from topological point of view, those data sets are discrete, where a particular metric does not make any difference. A finite topological space satisfies almost every topological property when it is $T_1$, i.e., every singleton is closed. It satisfies almost nothing when it is not $T_1$. In 1966, R.E. Stong \cite{finitespace}, published an article on finite topological spaces. That article  pretty much put an end to the study of finite topological spaces. 
    
  The situation becomes interesting when we try to ask data mining/machine learning questions within finite topological spaces. For instance, there are problems such as the clustering, classification, recommendation, forecasting, outlier detection, ranking, optimization, and  regression, i.e., those typical data mining questions, inside a finite topological space. One can also ask topological questions within a data set. For instance, how to describe connectedness and closure in a finite data set?
  
  In 2009, Carlson published the article "Topology and data" \cite{carlsson}, which provided a survey of using algebraic topological tools on data sets. A (very large) data set can be represent by a set of nodes and connections, where each node stands for a clustering. Since then, the research area of Topological Data Analysis started blooming. 
  Around that time, the author of this article worked on the so-called "typed topological spaces", which was more set theory and logic originated, and was based on infinite sets. The author was inspired by \cite{carlsson} and turned to study "typed topological spaces" from the perspective of finite data sets.   
  
   In this article, we propose the so-called $typed$ $topology$, in which some open sets are assigned $types$. Topological concepts can be defined by using typed open sets.  For instance, the closure of a set can be re-defined using only open neighborhoods of certain types. It is interesting to see that the closure concept will not be the same anymore. In fact, it will  have several tracks, other than a "thin" layer of boundary. It turns out that the DBSCAN algorithm can be well described by the concept of $p$-closure (here, $p$ is a type). We will also introduce other type based concepts, and use them to analyze finite topological spaces. For instance, we define type based clustering, tracks, port (starting points), horizontal indexing, surgery, and branches in a data set.     

    The arrangement of this article is as follows. In section 2, we define the so-called $typed$ $topological$ $space$. Then, we redefine some of the basic concepts from general topology, such as type closure, type connectedness, type connected component. Some examples are provided. We will show how the DBSCAN algorithm is described by type closures. 
    In section 3, we study the cluster $p\vdash tr(x)$. Various concepts are introduced, such as tracks, port (the starting points of a cluster), straight space, surgery, $p$-closure connected sets, and surgery. We also discuss a method to separate a mixed cluster into disjoint clusters.  
    In section 4, we study a local and horizontal axis defined on $p\vdash Track_i(x)$. In particular, we investigate how to expand the axis of some points to a larger area $q\vdash tr(x)$ defined by a stronger type $p<<q$.   
    In section 5, we study branches inside a cluster $p\vdash tr(x)$. In addition, we introduce the types of $left-r$, $right-r$, $up-left-r$ and $up-right-r$, and use them to equip a data set in $R^2$ into a directed typed topological space. An example will be given with calculation of tracks and branches.

\section{Typed topological spaces}
\indent Given a topological space $(X, \mathcal T)$, and a partially ordered set $<P,\leq>$,  we combined them into a typed topological space. Each element in $P$ is considered as a type. 

\begin{definition}\label{typedSpace}
 Let $(X, \mathcal T)$ be a topological space and let $<P, \leq>$ be a partially ordered set. For every $x\in X$, set $\mathcal U(x)=\{O\in\mathcal T:x\in O\}$. If for every $x\in X$, there exists a partial function
$\sigma_x: \mathcal U(x)\rightarrow P$ satisfying
for every $U,V\in domain(\sigma_x)$, $\sigma_x(U)\leq \sigma_x(V)$ when $U\subseteq V$,
then, the 5-tuple 
$(X, \mathcal T, P, \leq, \{\sigma_x: x\in X\})$ is called a typed topological space.

The function $\sigma_x$ (or in short $\sigma$) is called the $assignment$ $function$. When $\sigma_x(U)= p$, we say $U$ is $of$ $type$ $p$, and write it as 
$p\vdash U(x)$.
$\Box$
\end{definition}

As a simple example, every topological space is automatically a typed topological space, where the partially ordered set $<P, \leq>$ is the topology $\mathcal T$ with $\subseteq$ as the partial order relation, i.e., 
$<P,\leq>=<\mathcal T, \subseteq>$. 
The assignment functions will be $\sigma_x(U)=U$ for every $x\in U\in\mathcal T$.

In general, for any subfamily  $\mathcal T'\subseteq\mathcal T$, let $P=\{\mathcal T'\}$. The set $P$ is equipped with partial order $\leq$ such that $\mathcal T'\leq \mathcal T'$. The assignment function $\sigma_{x}$ is defined as $\sigma_{x}(B) = \mathcal T'$ for $B\in \mathcal U_x\cap\mathcal T'$. A typed topological space is obtained.

The first countable topological space is a typed topological space, where the assignment function $\sigma_x$ maps a countable neighborhood base of the point $x$, $\{U_i: i<\omega\}$ with $U_{i+1}\subseteq U_i$, to the set $P=\{\frac{1}{n}: n=1, 2, ...\}$ with the usual real line order $\leq$. We may call a first countable topological space a type $(P, \leq)$ space.

Further, given a set $X$ and a set of binary relationships $R$ on $X\times X$, for any $x\in X$ and  any $r\in R$, we define $U_r(x) = \{y\in X: r(x, y)\}$. Those $U_r(x)'s$ form a neighborhood basis of $x$. Let $P=\{U_r(x): x\in X, r\in R\}$. The partially order is the subset relation. 
The assignment function 
$\sigma_x$ is defined as $\sigma_x(U_{r}(x)) = U_r(x)$. A typed topological space is obtained.
 
\begin{example}\label{DBSCAN}
Let $X$ be a finite subset on the XY-plane. The set $X$ is equipped with the usual Euclidean topology. For each 
$x\in X$, and any positive real number $p$, the open disk of $x$ with radius $p$ is defined as
$U_p(x)=\{y: distance(x, y)<p\}$ . Let $P$ be a set of some real numbers. The set $P$ comes with the usual linear order $\leq$.   For instance, one may choose a very small positive real number $\epsilon$, and set $P=\{\epsilon, 2\epsilon, 4\epsilon,..., 2n\epsilon\}$. 
The assignment function $\sigma_x$ is defined as $\sigma_x(U_p(x)) = p$ for $p\in P$. A typed topological space can be created accordingly. 
$\Box$
\end{example}

\begin{example}\label{qualitative}
Let $X$ be the set of all high school students in a county. Let $P$ be a set of binary relationships, e.g., "friends", "close friends", "best friends", "class mates", "members on a math club", "members on a basket ball team", etc. There is a natural partial order relation on $P$. For instance, "best friends" are "close friends", and "close friends" are "friends". For $x\in X$, $p\in P$, set $U_p(x)=\{y\in X: p(x, y)\}$, and let $\sigma_x(U_{p}(x)) = p$. A typed topological space is obtained.
$\Box$
\end{example}

Example \ref{qualitative} has a particular meaning. It is a purely qualitative data set.

\begin{remark}
In the Definition \ref{typedSpace}, each $\sigma_x$ is a partial function. It is worth noting that for an open neighborhood $U\in\mathcal U(x)$ such that $\sigma_x(U)=p$  (or $p\vdash U(x)$),  and a point $y\in U$ satisfying $y\neq x$, we may not have $\sigma_y(U)=p$. In other words, an open set $U$ that is a type $p$ open neighborhood of $x$, may not be a type $p$ open neighborhood of a point $y\in U$. $\Box$
\end{remark}

\indent Topological concepts such as closure, connectedness and connected components can be re-defined using types. 
  
\indent In the following, we will use $p\vdash U(x)$  to indicate that $U$ is an open neighborhood of $x$ of type $p$.

\begin{definition}\label{defClosure}
Let $(X, \mathcal T, P, \leq, \{\sigma_x: x\in X\})$ be a typed topological space. 
Given a set $A\subseteq X$ and a type $p\in P$, a point $x\in X$ is called a $p$-$accumulation$ $point$ of $A$, 
if for every $p\vdash U(x)$, we have 
$U\cap A\neq\emptyset$. The set 
$A\cup\{x\in X: x~ is~an~accumulation~point~of~A\}$ 
is called the $direct$ $closure$ of $A$, and is denoted 
 $p\vdash CL_1(A)$.
 $\Box$
\end{definition}

\indent In general, we may not have 
$p\vdash CL_1(p\vdash CL_1(A))=p\vdash CL_1(A)$. Let $x\in (p\vdash CL_1(p\vdash CL_1(A)))$, i.e., $x$ is a $p$-accumulation point of $p\vdash CL_1(A)$. For any  
$p\vdash U(x)$, we have  $U\cap (p\vdash CL_1(A))\neq\emptyset$. However, for any point $y\in U\cap (p\vdash CL_1(A))$, the set $U$ may not be an open neighborhood of $y$ of type $p$, i.e.,$\sigma_y(U)\neq p$, which means that we may not have $U\cap A\neq\emptyset$. Therefore, $x$ may not be a $p$-accumulation point of $A$.

\begin{definition}
We can use the notation
$p\vdash CL_2(A)$ for  $p\vdash CL_1(p\vdash CL_1(A))$, 
$p\vdash CL_3(A)=p\vdash CL_1(p\vdash CL_2(A)))$, ... $p\vdash CL_n(A)=p\vdash CL_1(p\vdash CL_{n-1}(A))$.

 The $transitive$ $p$-$closure$ is defined as 
$p\vdash tr(A) = \bigcup\{p\vdash CL_n(A): n\in N\}$. 

\indent When $A=\{x\}$, we use $p\vdash CL_1(x)$  to denote $p\vdash CL_1(\{x\})$, and $p\vdash tr(x)$ to denote $p\vdash tr(\{x\})$. $\Box$
\end{definition}

\indent We may not have 
$p\vdash tr(p\vdash tr(A)) = p\vdash tr(A)$ either. However, when the assignment function $\sigma$ satisfies an additional condition, that statement becomes true. 

\begin{definition}\label{leastNeighborhood}
Let everything be as in Definition \ref{typedSpace}. Given a type $p\in P$, we call $\sigma$ has least $p$-neighborhood, provided that for any $x\in X$, there exists a neighborhood $p\vdash U(x)$ such that for any neighborhood $p\vdash V(x)$, the statement $U\subseteq V$ holds. The least $p$-neighborhood of $x$ is denoted $p\vdash U_{min}(x)$.
$\Box$
\end{definition}

\begin{proposition}
When $\sigma$ has least $p$-neighborhood for every point in $ X$, a point $x$ is a 
$p$-accumulation point of a subset $A$ if and only if 
$(p\vdash U_{min}(x))\cap A\neq\emptyset$.
$\Box$
\end{proposition}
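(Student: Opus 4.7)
The plan is to prove both directions of the biconditional directly from the definitions, using the defining property of the least $p$-neighborhood as the only non-trivial input.

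For the forward direction, I assume $x$ is a $p$-accumulation point of $A$. By Definition \ref{defClosure}, this means $U \cap A \neq \emptyset$ for \emph{every} neighborhood $p \vdash U(x)$. Since $\sigma$ has a least $p$-neighborhood at $x$, the set $p \vdash U_{min}(x)$ is itself a $p$-neighborhood of $x$, so specializing the universal statement to $U = p \vdash U_{min}(x)$ immediately gives $(p \vdash U_{min}(x)) \cap A \neq \emptyset$.

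For the backward direction, I assume $(p \vdash U_{min}(x)) \cap A \neq \emptyset$ and let $p \vdash V(x)$ be an arbitrary $p$-neighborhood of $x$. By Definition \ref{leastNeighborhood}, the least $p$-neighborhood satisfies $p \vdash U_{min}(x) \subseteq V$. Intersecting with $A$ preserves inclusion, so
\[
\emptyset \neq (p \vdash U_{min}(x)) \cap A \subseteq V \cap A,
\]
which shows $V \cap A \neq \emptyset$. Since $V$ was arbitrary, $x$ is a $p$-accumulation point of $A$.

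There is no real obstacle here; the proposition is essentially a restatement of what it means for $p \vdash U_{min}(x)$ to be a minimum element of the family of $p$-neighborhoods at $x$, combined with monotonicity of intersection. The only point worth flagging in the write-up is that the defining inequality $U \subseteq V$ for the least $p$-neighborhood is applied on the one side one is given, rather than on the ordering of types in $P$, so the reader is not tempted to invoke the assignment function's order-preservation property, which plays no role in this argument.
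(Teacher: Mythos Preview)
Your proof is correct. The paper states this proposition without proof, treating it as immediate from the definitions; your argument supplies exactly the expected two-direction verification (specialize to $U_{min}$ for the forward direction, use $U_{min}\subseteq V$ for the backward direction), so there is nothing to compare beyond noting that your write-up makes explicit what the paper leaves implicit.
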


When $X$ is finite, the assignment function $\sigma$ can be easily amended to has least $p$-neighborhood for any type $p\in P$ and any point $x\in X$.

\begin{proposition}
Let everything be as in Definition \ref{typedSpace}. Given a type $p\in P$, if the assignment function $\sigma$ has least $p$-neighborhood, then for any $A\subseteq X$ we have $p\vdash tr(p\vdash tr(A)) = p\vdash tr(A)$.  

\end{proposition}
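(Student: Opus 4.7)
My plan is to prove the two inclusions separately. The inclusion $p\vdash tr(A)\subseteq p\vdash tr(p\vdash tr(A))$ is automatic and does not need the least $p$-neighborhood hypothesis: from $B\subseteq p\vdash CL_1(B)\subseteq p\vdash tr(B)$ for any set $B$, specialize to $B=p\vdash tr(A)$.

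For the reverse inclusion, it suffices to show that $p\vdash CL_n(p\vdash tr(A))\subseteq p\vdash tr(A)$ for every $n$ and then take the union over $n$. This reduces by a routine induction, using monotonicity of the operator $p\vdash CL_1$ in its argument (which follows directly from the definition of $p$-accumulation), to the single base case $p\vdash CL_1(p\vdash tr(A))\subseteq p\vdash tr(A)$. That base case is the heart of the argument.

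To prove it, I would suppose $x\in p\vdash CL_1(p\vdash tr(A))$ with $x\notin p\vdash tr(A)$, so that $x$ must be a $p$-accumulation point of $p\vdash tr(A)$. Invoking the previous proposition, pick $y\in (p\vdash U_{min}(x))\cap p\vdash tr(A)$ and, crucially, fix a \emph{single} index $n$ with $y\in p\vdash CL_n(A)$. Every type-$p$ open neighborhood $V$ of $x$ contains $p\vdash U_{min}(x)$ and hence contains $y$, so $V\cap p\vdash CL_n(A)\neq\emptyset$ for every such $V$. Therefore $x$ is a $p$-accumulation point of $p\vdash CL_n(A)$, giving $x\in p\vdash CL_{n+1}(A)\subseteq p\vdash tr(A)$, contradicting the assumption.

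The main obstacle is precisely the extraction of a single index $n$ at this step. Without the least $p$-neighborhood hypothesis, each type-$p$ neighborhood $V$ of $x$ would only be known to meet some $p\vdash CL_{n(V)}(A)$, with $n(V)$ possibly unbounded as $V$ shrinks, and no fixed level $n+1$ of the tower would witness $x\in p\vdash CL_{n+1}(A)$. The existence of $p\vdash U_{min}(x)$ is the lever that lets one witness $y\in p\vdash CL_n(A)$ serve for every larger neighborhood simultaneously, which is exactly where the hypothesis enters essentially.
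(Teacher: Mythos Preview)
Your proposal is correct and follows essentially the same route as the paper's proof: both arguments reduce to showing $p\vdash CL_1(p\vdash tr(A))\subseteq p\vdash tr(A)$, use the least $p$-neighborhood $p\vdash U_{min}(x)$ to pick a witness $y$ lying in a fixed level $p\vdash CL_n(A)$, and conclude $x\in p\vdash CL_{n+1}(A)\subseteq p\vdash tr(A)$, then induct. The only cosmetic differences are that you frame the base case as a contradiction (harmless but unnecessary, since the argument is direct) and that you add an explanatory paragraph on why the hypothesis is essential, which the paper omits.
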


\begin{proof} If $x$ is a $p$-accumulation point of $p\vdash tr(A)$, i.e., $x\in (p\vdash CL_1(p\vdash tr(A)))$,
 then 
 $(p\vdash U_{min}(x))\cap (p\vdash tr(A))\neq\emptyset$. 
 Let $y\in (p\vdash U_{min}(x))\cap (p\vdash tr(A))$. 
 Then either 
$y\in A$ or $ y\in (p\vdash cl_n(A))$ for some $n\in N$. 
In the first case, we have $x\in (p\vdash CL_1(A))$.
In the second case, we have $x\in (p\vdash CL_1(p\vdash CL_n(A)) = p\vdash CL_{n+1}(A)$. Hence $x\in (p\vdash tr(A))$. So $p\vdash CL_1(p\vdash tr(A))\subseteq (p\vdash tr(A))$.

Inductively, we can show that 
 $p\vdash CL_n(p\vdash tr(A))\subseteq (p\vdash tr(A))$ for all $n$. Therefore, $p\vdash tr(p\vdash tr(A)) = p\vdash tr(A)$.
\end{proof}

\begin{definition}\label{connectedness}
  Let everything be as in Definition \ref{typedSpace}. A subset $A\subseteq X$ 
is called type-$p$-connected, if there do not exist 
two collections of $p$-neighborhoods 
$\{p\vdash U(x_i): i\in I\}$  and 
$\{p\vdash U(x_j):j\in J\}$ satisfying:
\begin{enumerate}
\item both $I \neq\emptyset$ and $J\neq\emptyset$;
\item $A=\{x_i: i\in I\}\cup\{x_j: j\in J\}$; and 
\item $(\bigcup\{U(x_i): i\in I\})\cap (\bigcup\{U(x_j): j\in J\})=\emptyset$.
\end{enumerate} 
$\Box$
\end{definition}

\begin{example}
On the $XY$-plane, let $X$ be the set 
$\{(0,0),(0,1), (1,1), (1,0)\}$ as in Example \ref{DBSCAN}. 
Then $X$ is type-$p$-connected for any real number $p>1$, 
and type-$p$-disconnected when $0<p\leq 1$.
$\Box$
\end{example}

\begin{remark}
In Definition \ref{connectedness}, we require that the collections satisfying $A=\{x_i: i\in I\}\cup\{x_j: j\in J\}$.
If that requirement is dropped, then the concept to be defined is similar to the traditional definition for connectedness, which is different than what is defined in Definition \ref{connectedness}.

 For instance, let 
$X=\{(0,0), (0,1), (1,1), (1,0), (2, 0)\}$ be as in Example \ref{DBSCAN}. Set $p=\sqrt{2}$.
Then $X$ is type-$p$-connected.
 However, we can find two collections
$\{U_p((2, 0))\}$ and 
$\{U_p((0,1))\}$. The set $U_p((2,0))\cap X=\{(2,0), (1, 0)\}$, 
and $U_p((0,1))\cap X=\{(0,1), (1,1), (0, 0)\}$, which are disjoint and cover $X$.  
$\Box$
\end{remark}

\begin{lemma}\label{closureConnect}
Let everything be as in Definition \ref{connectedness}. 
If $A\subseteq X$ is type-$p$-connected, then
\begin{enumerate}
\item  $p\vdash CL_n(A)$ is type-$p$-connected for $n>0$, and 
\item $p\vdash tr(A)$ is also type-$p$-connected.
\end{enumerate}
\end{lemma}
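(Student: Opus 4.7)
The plan is to prove both parts by contradiction: assume a disconnection of the closure, restrict it back to $A$ (or propagate it along the chain $A \subseteq p\vdash CL_1(A) \subseteq p\vdash CL_2(A) \subseteq \cdots$), and derive a contradiction either with the type-$p$-connectedness of $A$ or with the nonemptiness clause in Definition \ref{connectedness}. The whole argument hinges on one robustness feature of the disjoint-union condition: if $\{U(x_i): i\in I\}$ and $\{U(x_j): j\in J\}$ have disjoint unions, then any pair of subcollections, indexed by $I'\subseteq I$ and $J'\subseteq J$, still has disjoint unions. This lets us push disconnection witnesses through restrictions and limits.

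For part (1), I would first handle $n=1$ and then induct. Suppose $p\vdash CL_1(A) = B \cup C$ is a disconnection witnessed by collections $\{p\vdash U(x_i): i\in I\}$ with $\{x_i:i\in I\}=B$ and $\{p\vdash U(x_j): j\in J\}$ with $\{x_j:j\in J\}=C$. Put $A_B = A\cap B$ and $A_C = A\cap C$; since $A\subseteq p\vdash CL_1(A)$ we get $A=A_B\cup A_C$. The key substep is to show both $A_B$ and $A_C$ are nonempty. If, say, $A_B = \emptyset$, pick any $x\in B$; then $x\notin A$, so $x$ must be a $p$-accumulation point of $A$, and its designated $p$-neighborhood $U(x)$ in the collection satisfies $U(x)\cap A\neq\emptyset$. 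But $U(x)\subseteq \bigcup_{i\in I} U(x_i)$, while $A\subseteq A_C \subseteq C \subseteq \bigcup_{j\in J} U(x_j)$; the disjointness of the big unions then forces $U(x)\cap A = \emptyset$, a contradiction. With $A_B$ and $A_C$ both nonempty, the restricted collections witness a disconnection of $A$, contradicting type-$p$-connectedness of $A$. The inductive step is immediate: having established the base case, apply it with $p\vdash CL_n(A)$ in place of $A$ to conclude that $p\vdash CL_{n+1}(A) = p\vdash CL_1(p\vdash CL_n(A))$ is type-$p$-connected.

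For part (2), suppose a disconnection $p\vdash tr(A) = B\cup C$ exists, given by collections as in Definition \ref{connectedness}. Since $A$ is type-$p$-connected, the restriction argument from the previous paragraph shows that $A$ lies entirely in one side, say $A\subseteq B$. I would then prove by induction on $n$ that $p\vdash CL_n(A)\subseteq B$ for every $n$. The inductive step uses the same trick: any $x\in p\vdash CL_{n+1}(A)\cap C$ would be a $p$-accumulation point of $p\vdash CL_n(A)$ with a designated $p$-neighborhood $U(x)\subseteq \bigcup_{j\in J}U(x_j)$ hitting $p\vdash CL_n(A)\subseteq B\subseteq \bigcup_{i\in I}U(x_i)$, again contradicting the disjointness of the two big unions. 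Taking the union over $n$ gives $p\vdash tr(A)\subseteq B$, so $C=\emptyset$, contradicting the nonemptiness requirement for $J$.

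The main obstacle is really only conceptual rather than technical: one must keep track of which point's designated $p$-neighborhood comes from which side of the disconnection, since the assignment functions $\sigma_x$ are only partial and the same set $U$ can be a $p$-neighborhood at one point but not at another (cf.\ the remark after Definition \ref{typedSpace}). The argument above is robust because we never move a neighborhood between sides---each $U(x_i)$ stays on its own side throughout, and the disjointness of the big unions does all the separating work.
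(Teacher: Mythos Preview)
Your proposal is correct and follows essentially the same approach as the paper's own proof: restrict a putative disconnection of $p\vdash CL_1(A)$ (respectively $p\vdash tr(A)$) back to $A$, use type-$p$-connectedness of $A$ to force $A$ onto one side, and then invoke the $p$-accumulation property to contradict the disjointness of the two big unions; for (2) both you and the paper propagate this containment inductively along $p\vdash CL_n(A)$. The only differences are cosmetic---your case split (both $A_B,A_C$ nonempty versus one empty) is just a reorganization of the paper's ``since $A$ is connected, one side must be empty'' step.
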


\begin{proof} For (1) We first show that $p\vdash CL_1(A)$ is type-$p$-connected.
For a contradiction, we assume that there are two collections  of $p$-neighborhoods 
$\{p\vdash U(x_i): i\in I\}$ with $I\neq\emptyset$ and 
$\{p\vdash U(x_j): j\in J\}$ with $J\neq\emptyset$ satisfying: 
(1) $p\vdash CL_1(A)=\{x_i: i\in I\}\cup\{x_j: j\in J\}$; and 
(2) $\bigcup\{U(x_i): i\in I\}\cap \bigcup\{U(x_j): j\in J\}=\emptyset$. Then the two collections 
$\{p\vdash U(x_i): i\in I\}$ and 
$\{p\vdash U(x_j): j\in J\}$ will be two collections that covers $A$. Hence one of them 
must be empty, say $I\cap A=\emptyset$, and 
 $A\subseteq \bigcup\{U(x_j): j\in J\}$.
Since $I\neq\emptyset$, 
there exists an $i\in I$ such that $x_i\in ((p\vdash CL_1(A))\setminus A)$. Hence $x_i$ is a $p$-accumulation point of $A$, and we have $U(x_i)\cap A\neq\emptyset$, which further implies that 
$U(x_i)\cap \bigcup\{U(x_j: j\in J\}\neq\emptyset$. It is a contradiction. Therefore, $p\vdash CL_1(A)$ is also type-$p$-connected.

Inductively, we will have $p\vdash CL_n(A) =p\vdash CL_1(p\vdash CL_{n-1}(A))$ is also type-$p$-connected. 
\\

(2) To show that $p\vdash tr(A)$ is type-$p$-connected, we assume for a contradiction that there exist two collections   of $p$-neighborhoods 
$\{p\vdash U(x_i): i\in I\}$ with $i\neq\emptyset$ and 
$\{p\vdash U(x_j): j\in J\}$ with $J\neq\emptyset$ satisfying: 
(1) $p\vdash tr(A)=\{x_i: i\in I\}\cup\{x_j: j\in J\}$; and 
(2) $\bigcup\{U(x_i): i\in I\}\cap \bigcup\{U(x_j): j\in J\}=\emptyset$. 
For a similar argument as in the proof for (1), we can assume that 
$A\subseteq \bigcup\{U(x_j): j\in J\}$. If $x_i\in p\vdash CL_1(A)\setminus A$, then $x_i$ is a $p$-accumulation point of $A$, which implies $U(x_i)\cap A\neq\emptyset$ and $U(x_i)\cap \bigcup\{U(x_j: j\in J\}\neq\emptyset$. 
Hence $x_i\in \bigcup\{p\vdash U(x_j): j\in J\}$. That shows 
$p\vdash CL_1(A)\subseteq \bigcup\{p\vdash U(x_j): j\in J\}$.   

Inductively, we can 
show that $p\vdash CL_n(A)\subseteq  \bigcup\{p\vdash U(x_j): j\in J\}$, which implies 
$p\vdash tr(A)\subseteq  \bigcup\{p\vdash U(x_j): j\in J\}$. That is a contradiction with our assumption. 
\end{proof}

\begin{definition}\label{type-p-component}
Let everything be as in Definition \ref{connectedness}. For any $x\in X$, the type-$p$-connected component $p\vdash C(x)$ is the union of 
all type-$p$-connected sets that contain $x$.
$\Box$
\end{definition}

\begin{theorem}\label{thm110}
Let everything be as in Definition \ref{type-p-component}. 
   For any $x\in X$ and $p\in P$, we have 
\begin{enumerate}
\item the type-$p$-connected component $C(x)$ is type-$p$-connected; 
\item  $p\vdash CL_1(C(x))= p\vdash C(x)$; and 
\item  $p\vdash tr(C(x))=p\vdash C(x)$.
\end{enumerate}
\end{theorem}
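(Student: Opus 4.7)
My plan is to prove (1) first by a direct connectedness-of-union argument, then derive (2) and (3) as easy corollaries using Lemma \ref{closureConnect} together with the maximality built into the definition of $p\vdash C(x)$.

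For (1), I would argue by contradiction. Suppose $p\vdash C(x)$ admits a separation witnessed by collections $\{p\vdash U(x_i):i\in I\}$ and $\{p\vdash U(x_j):j\in J\}$ with $I,J\neq\emptyset$, $p\vdash C(x)=\{x_i\}_{i\in I}\cup\{x_j\}_{j\in J}$, and disjoint neighborhood unions. The point $x$ lies on exactly one side; relabel so that $x=x_{i_0}$ for some $i_0\in I$. Now let $A$ be any type-$p$-connected subset of $X$ with $x\in A$. Restricting the two collections to the points of $A$ gives $\{p\vdash U(x_i):x_i\in A\}$ and $\{p\vdash U(x_j):x_j\in A\}$, which partition the points of $A$ and still have disjoint neighborhood unions. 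Since $A$ is type-$p$-connected, one of these restricted index sets must be empty, and since $x\in A$ lies on the $I$-side, it must be the $J$-side that is empty. Thus $A\subseteq\{x_i:i\in I\}$. Taking the union over all such $A$ gives $p\vdash C(x)\subseteq\{x_i:i\in I\}$, which contradicts $J\neq\emptyset$.

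For (2), note that $p\vdash C(x)\subseteq p\vdash CL_1(p\vdash C(x))$ is immediate from the definition of $CL_1$. For the reverse inclusion, since $p\vdash C(x)$ is type-$p$-connected by (1), Lemma \ref{closureConnect}(1) tells us that $p\vdash CL_1(p\vdash C(x))$ is also type-$p$-connected, and it obviously contains $x$. By the maximality clause in Definition \ref{type-p-component}, every type-$p$-connected set containing $x$ is a subset of $p\vdash C(x)$, so $p\vdash CL_1(p\vdash C(x))\subseteq p\vdash C(x)$. Statement (3) is handled identically, substituting Lemma \ref{closureConnect}(2) for (1) to show $p\vdash tr(p\vdash C(x))$ is type-$p$-connected, and then applying the same maximality argument.

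The one delicate point, and the only place where the proof is not purely formal, is the restriction step in (1): one has to check that restricting the two neighborhood collections from the index sets over $p\vdash C(x)$ to the index sets over $A$ still yields a legitimate pair of witnesses to a separation of $A$ in the sense of Definition \ref{connectedness}. This relies on the fact that the separation condition only demands that the \emph{chosen} neighborhoods have disjoint union and that the \emph{base points} exhaust the set, both of which survive restriction. Once this is observed, the contradiction is immediate, and parts (2) and (3) become short applications of the preceding lemma.
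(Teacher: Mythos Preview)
Your proof is correct and follows essentially the same approach as the paper. For (1), both you and the paper argue by contradiction, restrict the putative separation to an arbitrary type-$p$-connected set $A\ni x$ contained in $C(x)$, and conclude that $A$ lies entirely on the $I$-side; the paper simply gestures toward ``a similar argument as in the proof of Lemma~\ref{closureConnect}'' where you spell out the restriction step explicitly. For (2) and (3), the paper only says they follow from Definition~\ref{connectedness} and Lemma~\ref{closureConnect}, whereas you write out the maximality argument in full --- but the intended reasoning is the same.
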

\begin{proof}
For (1), we assume for a contradiction that there exists two collections of $p$-neighborhoods 
$\{p\vdash U(x_i): i\in I\}$ with $I\neq\emptyset$ and 
$\{p\vdash U(x_j): j\in J\}$ with $J\neq\emptyset$ satisfying: 
(1) $p\vdash C(x)=\{x_i: i\in I\}\cup\{x_j: j\in J\}$; and 
(2) $\bigcup\{U(x_i): i\in I\}\cap \bigcup\{U(x_j): j\in J\}=\emptyset$.
Without loss of generality, we assume $x\in \bigcup\{p\vdash U(x_i): i\in I\}$. 
By definition, the $p\vdash C(x)$ is the union of all type-$p$-connected subsets that containing $x$. 
Hence for any $y\in C(x)$, there exists a type-$p$-connected subset $A\subseteq C(x)$ such that 
$\{x, y\}\subseteq A$. Using similar argument as that in the proof of  Lemma \ref{closureConnect}, we 
have $\{x, y\}\subseteq A\subseteq \bigcup\{p\vdash U(x_i): i\in I\}$. 
The arbitrary choice of $y$ implies that $C(x)\subseteq \bigcup\{p\vdash U(x_i): i\in I\}$, which is a contradiction.

\indent (2) and (3) are true according to Definition \ref{connectedness} and the Lemma \ref{closureConnect}.
\end{proof}

\indent Generally speaking, if $B\subseteq A$, then $(p\vdash tr(B))\subseteq (p\vdash tr(A))$. In particular, if $y\in (p\vdash tr(x))$, 
then $(p\vdash tr(y))\subseteq (p\vdash tr(x))$.

 A quantitative data set usually satisfies the following property of being "symmetrically typed". 

\begin{definition}\label{symmtricType}

A typed topological space $(X, \mathcal T, P, \leq, \{\sigma_x: x\in X\})$ is called $p$-symmetrically typed, if for every two 
points $x, y\in X$ satisfying $y\notin (p\vdash U(x))$ for a neighborhood $p\vdash U(x)$ of $x$, there exists a neighborhood $p\vdash V(y)$ such that $x\notin (p\vdash V(y))$.
$\Box$
\end{definition}

The following proposition is straight-forward.
\begin{proposition}
When the space $X$ is $p$-symmetrically typed and $\sigma$ has least $p$-neighborhood, it is easily to check that $x\notin (p\vdash U_{min}(y))$ if and only if  $y\notin (p\vdash U_{min}(x))$. Equivalently, we have $x\in (p\vdash U_{min}(y))$ if and only if  $y\in (p\vdash U_{min}(x))$.
 $\Box$
\end{proposition}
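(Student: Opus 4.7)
The plan is to show the contrapositive form directly, namely that $x \notin p \vdash U_{min}(y) \Rightarrow y \notin p \vdash U_{min}(x)$, since the reverse implication then follows by swapping the roles of $x$ and $y$. The two displayed equivalences in the statement are themselves contrapositives of one another, so establishing just one of them suffices.

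For the forward direction I would start by assuming $x \notin p \vdash U_{min}(y)$. The set $p \vdash U_{min}(y)$ is by construction a $p$-type open neighborhood of $y$ that does not contain $x$. This is exactly the hypothesis needed to invoke Definition \ref{symmtricType} (with the roles of the two points in that definition played by $y$ and $x$ respectively): there exists a $p$-type open neighborhood $p \vdash V(x)$ of $x$ such that $y \notin V$. Now I would apply the least-$p$-neighborhood assumption at the point $x$: since $p \vdash U_{min}(x)$ is contained in every $p$-type neighborhood of $x$, in particular $p \vdash U_{min}(x) \subseteq V$. Combined with $y \notin V$, this gives $y \notin p \vdash U_{min}(x)$, which is what we wanted.

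The reverse implication is obtained by an identical argument with $x$ and $y$ swapped, yielding the first equivalence. The second equivalence (with $\in$ in place of $\notin$) is literally the contrapositive, so no further work is required.

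I do not expect any serious obstacle: the proof is a straightforward unwinding of Definition \ref{leastNeighborhood} and Definition \ref{symmtricType}. The only point requiring a moment of care is the bookkeeping in applying the symmetric-type condition, since the definition is phrased in terms of an arbitrary $p$-neighborhood of one point producing a $p$-neighborhood of the other, and one must feed in the minimal neighborhood on one side while extracting an arbitrary neighborhood on the other side and then compress it back to the minimal one using Definition \ref{leastNeighborhood}.
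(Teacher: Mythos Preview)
Your argument is correct and is exactly the routine verification the paper has in mind; the paper itself omits the proof, remarking only that the proposition is straightforward. Your bookkeeping in applying Definition~\ref{symmtricType} (feeding in $p\vdash U_{min}(y)$ to obtain some $p\vdash V(x)$, then shrinking to $p\vdash U_{min}(x)$ via Definition~\ref{leastNeighborhood}) is the natural and only real step.
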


\begin{theorem}\label{ThmSymmetricType}
Let $(X, \mathcal T, P, \leq, \{\sigma_x: x\in X\})$ be a $p$-symmetrically typed topological space for a type $p\in P$. Assume that the assignment function $\sigma$ has least $p$-neighborhood. Then, for any $x\in X$, the following statements are true.
\begin{enumerate}

\item $y\in (p\vdash CL_1(x))$ if and only if $x\in (p\vdash CL_1(y))$;

\item  $p\vdash C(x)=p\vdash tr(x)$;

\item  for any $y\in (p\vdash tr(x))$, $p\vdash tr(x)=p\vdash tr(y)$.

\end{enumerate}
\end{theorem}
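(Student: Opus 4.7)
My plan is to handle the three statements in order, using the proposition just above the theorem as the bridge between symmetry and least $p$-neighborhoods.

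For (1), I would reduce $p$-accumulation to a simple $U_{min}$ condition. By the proposition on least $p$-neighborhoods, $y\in (p\vdash CL_1(\{x\}))$ means either $y=x$ or $(p\vdash U_{min}(y))\cap\{x\}\neq\emptyset$, i.e.\ $x\in (p\vdash U_{min}(y))$. By the symmetry proposition preceding the theorem, this is equivalent to $y\in (p\vdash U_{min}(x))$, which rewrites back as $x\in (p\vdash CL_1(\{y\}))$. This part is essentially unwinding definitions.

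For (2), the inclusion $p\vdash tr(x)\subseteq p\vdash C(x)$ is immediate: the singleton $\{x\}$ is trivially type-$p$-connected, so by Lemma \ref{closureConnect}, $p\vdash tr(x)$ is type-$p$-connected and contains $x$, hence sits inside $p\vdash C(x)$. For the reverse inclusion, I would argue by contradiction: suppose $B:=(p\vdash C(x))\setminus (p\vdash tr(x))$ is nonempty, and construct a disconnection of $p\vdash C(x)$, contradicting Theorem \ref{thm110}(1). I partition $p\vdash C(x)$ as $(p\vdash tr(x))\sqcup B$ and use the two collections $\{p\vdash U_{min}(z): z\in p\vdash tr(x)\}$ and $\{p\vdash U_{min}(z'): z'\in B\}$. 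The crux is showing these unions are disjoint. The key closure property I would first establish is: if $z\in p\vdash tr(x)$ and $w$ is any point with $z\in (p\vdash U_{min}(w))$, then $w\in p\vdash tr(x)$; indeed $z$ lies in some $p\vdash CL_n(x)$, and the hypothesis forces $(p\vdash U_{min}(w))\cap (p\vdash CL_n(x))\neq\emptyset$, so $w\in p\vdash CL_{n+1}(x)$. If some $w$ lay in $(p\vdash U_{min}(z))\cap (p\vdash U_{min}(z'))$ with $z\in p\vdash tr(x)$ and $z'\in B$, then symmetry gives $z, z'\in (p\vdash U_{min}(w))$, so first $w\in p\vdash tr(x)$ by the above, and then applying the same principle again (with the roles swapped using symmetry, since $w\in p\vdash tr(x)$ and $z'\in (p\vdash U_{min}(w))$ means $w\in (p\vdash U_{min}(z'))$) forces $z'\in p\vdash tr(x)$, contradicting $z'\in B$.

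For (3), the inclusion $p\vdash tr(y)\subseteq p\vdash tr(x)$ is exactly the monotonicity observation stated just before Definition \ref{symmtricType}. For the reverse, I would pass through connected components using (2): $y\in p\vdash tr(x)=p\vdash C(x)$ means $p\vdash C(x)$ is a type-$p$-connected set containing $y$, so $p\vdash C(x)\subseteq p\vdash C(y)$, and symmetrically $x\in p\vdash C(x)\subseteq p\vdash C(y)$ makes $p\vdash C(y)$ a type-$p$-connected set containing $x$, giving $p\vdash C(y)\subseteq p\vdash C(x)$. Two applications of (2) then yield $p\vdash tr(x)=p\vdash C(x)=p\vdash C(y)=p\vdash tr(y)$. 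The main obstacle throughout is the disjointness verification in (2); everything else is essentially bookkeeping once the symmetry/least-neighborhood interplay is set up.
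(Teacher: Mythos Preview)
Your arguments for (1) and (2) are correct and essentially match the paper's: both prove (2) by contradiction, partitioning $p\vdash C(x)$ into $p\vdash tr(x)$ and its complement and exhibiting a type-$p$-disconnection via the $U_{min}$ covers, with symmetry supplying the disjointness. Your organization of the disjointness check (picking a hypothetical $w$ in the overlap and chasing it into $p\vdash tr(x)$) is a slight repackaging of the paper's computation that $\bigcup_{z\in p\vdash tr(x)} U_{min}(z)=p\vdash tr(x)$, but the content is the same.

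Where you genuinely diverge is in (3). The paper argues directly at the level of closures: starting from $y\in p\vdash CL_n(x)$ it builds a descending chain $y=y_n,y_{n-1},\dots,y_1,x$ with $y_{i+1}\in p\vdash CL_1(y_i)$, then invokes (1) to reverse every link, obtaining $x\in p\vdash tr(y)$ and hence $p\vdash tr(x)\subseteq p\vdash tr(y)$. You instead route through (2): since $p\vdash C(x)$ is a type-$p$-connected set containing $y$, it sits inside $p\vdash C(y)$, and symmetrically, so $p\vdash C(x)=p\vdash C(y)$; two applications of (2) finish. Your approach is shorter and avoids the inductive chain, at the cost of making (3) logically dependent on (2). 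The paper's proof of (3) uses only (1) and is therefore independent of the component machinery; this is a mild robustness advantage, but in the present setting both are perfectly valid.
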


\begin{proof} (1) It is directly from the definition of being $p$-symmetrically typed.

(2) By Lemma \ref{closureConnect}, $p\vdash tr(x)$ is type-$p$-connected. So $p\vdash tr(x)\subseteq p\vdash C(x)$.  Assume for a contradiction that $p\vdash tr(x)\neq p\vdash C(x)$. Set $A=(p\vdash C(x))\setminus (p\vdash tr(x))$, and $B=p\vdash tr(x)$. 
  Since $p\vdash tr(p\vdash tr(x))=p\vdash tr(x)$, for every point $y\in A$, 
there exists $p\vdash U(y)$ satisfying $(p\vdash U(y))\cap B=\emptyset$.  Then, $(p\vdash U_{min}(y))\cap B=\emptyset$, and the generality of $y$ implies that $\bigcup\{p\vdash U_{min}(y): y\in A\}\cap B=\emptyset$. 

By Definition \ref{symmtricType}, for each $z\in B$, there exists $p\vdash U(z)$ satisfying 
$y\notin (p\vdash U(z))$.  Since $p$ has least neighborhood, we can assume that both 
$p\vdash U(y) = p\vdash U_{min}(y)$ and $p\vdash U(z)=p\vdash U_{min}(z)$. 
Hence $y\notin \bigcup\{p\vdash U_{min}(z): z\in B\}$. The arbitrary selection of $y\in A$ implies 
that $A\cap \bigcup\{p\vdash U_{min}(z): z\in B\} =\emptyset$.

For $z\in B$, and any $w\in (p\vdash U_{min}(z))$, one has $z\in (p\vdash CL_1(w))$. By (1), we have 
$w\in (p\vdash CL_1(z))$. Hence $(p\vdash U_{min}(z))\subseteq B$. Therefore, 
we have $\bigcup\{p\vdash U_{min}(z): z\in B\} = B$, which implies $\bigcup\{p\vdash U_{min}(y): y\in A\}\cap \bigcup\{p\vdash U_{min}(z): z\in B\} = \emptyset$.

The two collections of type $p$ open neighborhoods $\{p\vdash U_{min}(z): z\in A)\}$ and $\{p\vdash U_{min}(y): y\in B\}$, witness that $p\vdash C(x)$ is not type-$p$-disconnected. We arrive at a contradiction.

(3) For any $y\in (p\vdash tr(x))$, we have $y\in (p\vdash CL_n(x))$ for some number $n>0$, which means that $(p\vdash U_{min}(y))\cap (p\vdash CL_{n-1}(x))\neq\emptyset$. Let $y_{n-1}\in (p\vdash U_{min}(y))\cap (p\vdash CL_{n-1}(x))$. Then by definition, $(p\vdash U_{min}(y_{n-1}))\cap (p\vdash CL_{n-2}(x))\neq\emptyset$. So we can choose $y_{n-2}\in  (p\vdash U_{min}(y_{n-1}))\cap (p\vdash CL_{n-2}(x))$. Inductively, we can define $y=y_n, y_{n-1}, y_{n-2},..., y_1$ satisfying $y_{i+1}\in (p\vdash CL_1(y_i))$ for all $1\leq i<n$. By (1), we have also $y_i\in (p\vdash CL_1(y_{i+1}))$ for all $i$. Hence $x\in (p\vdash tr(y))$, and $(p\vdash tr(x))\subseteq (p\vdash tr(y))$. Therefore $p\vdash tr(x) = p\vdash tr(y)$.
\end{proof}

The algorithm of Density-Based Spatial Clustering Of Applications With Noise (DBSCAN) was studied in 1960s,
and improved later. It was chosen for the test of time award in 2014.  The algorithm choose a distance $\epsilon$ and 
a number $minPoints\geq 3$. Points of a data set are classified 
into (1) core points, (2) directly-reachable points, (3) reachable points, and (4) outliers. 

A point $x$ is called a $core$ $point$ if $|\{y\in X: distance(x,y)<\epsilon\}| \geq  minPoints$. 
  For a core point $x$, points inside $\{y\in X: distance(x,y)<\epsilon\}$ are called $directly$ $reachable$ points from $x$. 
  If a point $z$ is directly reachable from $y$ and $y$ is directly reachable from $x$, then $z$ is called  $reachable$ from  $x$. In general, if there is a path $x_1=x, ..., x_n=z$ such that each $x_{i+1}$ is directly reachable from $x_i$, then $z$ is reachable from $x$. 
  A point is not reachable from any other points is called an $outlier$ or a $noise$ $point$.

Comparing with our definition of $p\vdash tr(x)$, the DBSCAN algorithm calculates exactly $p\vdash tr(x)$. Here, the type $p$ represents the open neighborhood of radius $\epsilon$.   
 A directly reachable point from $x$ is a point $y\in(p\vdash Cl_1(x))$, and  
 a reachable point $y$ satisfies $y\in (p\vdash CL_n(x))$. 
 Finally,  $x$ is a core point if  $|p\vdash CL_1(x)|\geq minPoints$. 

  Hence, the DBSCAN algorithm calculates the $cluster$ as closure in the form of $p\vdash tr(x)$. Further, the concept of density-connectedness defined in DBSCAN algorithm is equivalent to our definition of  type-$p$-connectedness.

\section{Clusters as closure}

\indent We assume that  $X$ is finite for the rest of this article. 

\indent The clusters calculated in DBSCAN algorithm may be in the form of $p\vdash tr(x)$. Several $p\vdash tr(x_i)$'s can be mixed together as in Figure \ref{starightSurgery}(b). 

In this section, we will study some structures of such clusters.
   
\begin{definition}\label{def_border}
Let $(X, \mathcal T, P, \leq, \{\sigma_x: x\in X\})$ be a typed topological space. For any set $A\subseteq X$ and any type $p\in P$, the $Tracks$ of $A$, denoted $p\vdash Track_i(A)$ for $0<i<\infty$, are defined as follows.
\begin{enumerate}
  \item Set $p\vdash Track_0(A) =  A$;
  \item set $p\vdash Track_1(A) = (p\vdash CL_1(A))\setminus A $;
  \item for $i>1$, set $p\vdash Track_i(A) = (p\vdash CL_i(A))\setminus (p\vdash CL_{i-1}(A))$. 
\end{enumerate}
$\Box$
\end{definition}

\indent A quick observation is that when $p\vdash Track_i(A)=\emptyset$ for some $i$, then both $p\vdash CL_j(A)=p\vdash CL_i(A)$ 
and $p\vdash Track_j(A)=\emptyset$ hold true for all $j\geq i$. Further, 
$p\vdash tr(A) = p\vdash CL_i(A)$ is also true. 

For the cluster $p\vdash tr(x)$, we may also write it as $p\vdash Track (x)$. Since $X$ is finite, we use $|p\vdash Track(x)|$ to denote the smallest such integer $i$ that $p\vdash Track_i(A)=\emptyset$.
Certainly, $p\vdash tr(x) = \bigcup\{p\vdash Track_i(x): i<|p\vdash Track(x)\}$.

In the following, we introduce a class of rather simple typed topological spaces. 

\begin{definition}\label{straight}
  Let everything be as in Definition \ref{def_border}. Given a type $p\in P$, for any point $x\in X$,   the cluster $p\vdash tr(x)$ is called  $straight$ if for any $j>i>0$ and any two points $y,z$ satisfying $y\in(p\vdash Track_i(x))$ and $z\in(p\vdash Track_j(x))$, 
  the statement $y\in (p\vdash CL_1(z))$ implies $z\in(p\vdash CL_1(y))$
 (equivalently, $z\in(p\vdash U_{min}(y))$  implies $y\in(p\vdash U_{min}(z))$). 

  The space $X$ is called $p-straight$ if for every $x$, the cluster $p\vdash tr(x)$ is straight. (See Figure \ref{starightSurgery}(a))
  $\Box$
\end{definition}

\begin{figure}
  \includegraphics[width=4in]{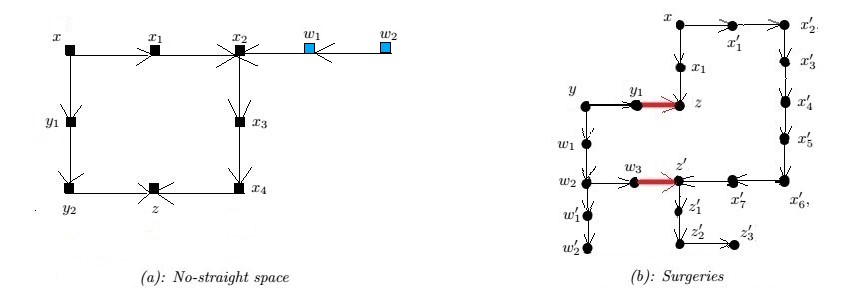}
  \caption{A cluster that is not straight and locations for surgery }
  \label{starightSurgery}
  \end{figure}

\begin{lemma}\label{straight2}
Let everything be as in Definition \ref{straight}. Given a type $p\in P$, assume that $X$ is $p$-straight. For any point $x, y\in X$ satisfying $y\in(p\vdash Track_i(x))$ for some $i$, if $z\in (p\vdash CL_1(y))$, then either $z\in (p\vdash Track_{i+1}(x))$, or $z\in (p\vdash Track_i(x))$ or $z\in(p\vdash Track_{i-1}(x))$. 
\end{lemma}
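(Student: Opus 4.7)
The plan is to control the track index $j$ of $z$, i.e., the unique index with $z\in p\vdash Track_j(x)$, and show $j\in\{i-1,i,i+1\}$. I will prove this by separately establishing the upper bound $j\leq i+1$ and the lower bound $j\geq i-1$; each reduces to one monotonicity chain for the operator $p\vdash CL_1$, with the lower bound also invoking straightness once.

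For the upper bound, a single monotonicity step suffices: since $\{y\}\subseteq p\vdash CL_i(x)$, we have $z\in p\vdash CL_1(\{y\})\subseteq p\vdash CL_1(p\vdash CL_i(x))=p\vdash CL_{i+1}(x)$. In particular $z\in p\vdash tr(x)$, so $j$ is well-defined, and the defining relation $p\vdash Track_{j}(x)=p\vdash CL_j(x)\setminus p\vdash CL_{j-1}(x)$ forces $j\leq i+1$.

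For the lower bound, I argue by contradiction: suppose $j\leq i-2$, so $i\geq 2$ and $j\geq 0$. I apply the straightness of $p\vdash tr(x)$ with our $z$ playing the role of the lower-track element (index $j$) and our $y$ the higher-track element (index $i>j$); the premise of the straight condition, ``lower is in $p\vdash CL_1$ of higher'', is exactly our hypothesis $z\in p\vdash CL_1(y)$, so straightness returns $y\in p\vdash CL_1(z)$. Applying monotonicity in the reverse direction, $y\in p\vdash CL_1(z)\subseteq p\vdash CL_1(p\vdash CL_j(x))=p\vdash CL_{j+1}(x)\subseteq p\vdash CL_{i-1}(x)$, contradicting $y\in p\vdash Track_i(x)=p\vdash CL_i(x)\setminus p\vdash CL_{i-1}(x)$.

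The main thing to watch is aligning which point plays the lower-track role, since the straight condition is asymmetric in its two indices (in particular, the hypothesis of straightness produces a conclusion going from the higher index towards $x$, which is what powers the contradiction). A secondary bookkeeping item is the degenerate sub-case $j=0$, where straightness as stated demands positive indices; there $z=x\in p\vdash CL_1(y)$ forces $y\in p\vdash U_{min}(x)$, and under the standing least-$p$-neighborhood and symmetric-typing assumptions this puts $y\in p\vdash CL_1(x)=p\vdash Track_0\cup p\vdash Track_1$, so $i\leq 1$ and $z$ automatically lands in one of the three allowed tracks.
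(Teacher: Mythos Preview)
Your argument is essentially identical to the paper's: the upper bound $j\le i+1$ via monotonicity of $p\vdash CL_1$, then a contradiction for $j<i-1$ by invoking straightness to flip $z\in p\vdash CL_1(y)$ to $y\in p\vdash CL_1(z)$ and pushing $y$ into $p\vdash CL_{j+1}(x)\subseteq p\vdash CL_{i-1}(x)$. The paper's proof does exactly this, with the same role assignment for $y$ and $z$.

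One remark on your edge case: you correctly observe that the straightness clause in Definition~\ref{straight} requires strictly positive track indices, so $j=0$ is not literally covered; the paper's proof simply glosses over this. However, your proposed fix imports ``symmetric-typing assumptions,'' which are \emph{not} among the hypotheses of Lemma~\ref{straight2} (only $p$-straightness is assumed). So that patch is not valid as stated. In practice the paper treats straightness as if it also covers the boundary case, and you would be on equally firm ground simply noting the gap rather than resolving it with an extra hypothesis.
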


\begin{proof}
Since $z\in (p\vdash CL_1(y))$, the statement $z\in (p\vdash CL_{i+1}(x))$ is true. Then, we have that $z\in (p\vdash Track_j(x))$ for some $j\leq i+1$. Assume for a contradiction that $z\in (p\vdash Track_j(x))$ for some $j<i-1$. Since $X$ is $p$-straight, the statement $z\in (p\vdash CL_1(y))$ implies $y\in (p\vdash CL_1(z))$. Hence we have $y\in (p\vdash CL_k(x))$ for $k=j +1<(i-1)+1 = i$. It contradicts with the assumption that $y\in(p\vdash Track_i(x))$. Hence, we must have $z\in (p\vdash Track_k(x))$ for some $k\in\{i-1, i, i+1\}$.
\end{proof}

\begin{example}\label{examNonStraight}
  Figure \ref{starightSurgery} (a) shows an example of a non-straight space. The arrow between two points, e.g., from $x$ to $y_1$, indicates the relationship $y_1\in(p\vdash CL_1(x))$. The type $p$ means directed link with length $p=1.01$. The set $p\vdash tr(x)=\{x, y_1, y_2, z, x_1, x_2, x_3, x_4\}$. The two points $w_1, w_2$ are not inside $p\vdash tr(x)$.
  The point $z$ is inside $p\vdash Track_3(x)$ and $x_4\in (p\vdash Track_4(x))$. However, 
   $p\vdash U_{min}(z)=\{y_2,z, x_4\}$ while $p\vdash U_{min}(x_4)=\{x_3, x_4\}$, which means that 
  $z\not\in (p\vdash U_{min}(x_4))$ while $z\in(p\vdash U_{min}(z))$. 
  
  When the arrow between $z$ and $x_4$ is changed to double arrow, the space becomes straight.
  
   Figure \ref{starightSurgery} (a) demonstrates a situation when we try to establish indexing of points on a horizontal axis in next section, a confusion occurs to index the point $z$. $\Box$
\end{example}

Clusters  maybe a mixture of several $p\vdash tr(x_i)$'s. To study those clusters, we identify the so-called starting points (or entrances). The following concept of "port" is introduced.

\begin{lemma}\label{port}
Let everything be as in Definition \ref{def_border}. For any subset $D\subseteq X$ and any type $p\in P$,
there exists a subset of $D$, denoted  
 $port_p(D)$, satisfying
 \begin{enumerate}
 
\item  $D\subseteq p\vdash tr(port_p(D))$; and 
\item  for every two distinct points $z, w\in port_p(D)$, both $z\notin (p\vdash tr(w))$ and $w\notin (p\vdash tr(z))$ hold true. 
 \end{enumerate}
\end{lemma}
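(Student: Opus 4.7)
The plan is a greedy construction: start from $D$ and successively delete points that are redundantly reachable, terminating because $X$ (hence $D$) is finite. Throughout I would invoke the standing assumption that $X$ is finite, together with the remark after Definition~\ref{leastNeighborhood} permitting us to assume $\sigma$ has least $p$-neighborhoods; this gives the idempotence $p\vdash tr(p\vdash tr(A))=p\vdash tr(A)$ proven in the excerpt, which will be needed to keep the closure invariant across deletion steps.

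Formally, I would set $S_0=D$ and, whenever there exist distinct $z,w\in S_i$ with $z\in p\vdash tr(\{w\})$, put $S_{i+1}:=S_i\setminus\{z\}$; otherwise stop and declare $port_p(D):=S_i$. Since $|S_{i+1}|<|S_i|$, the process halts after at most $|D|$ steps. Condition (2) is then immediate from the stopping rule: if $z,w\in port_p(D)$ are distinct, neither $z\in p\vdash tr(\{w\})$ nor $w\in p\vdash tr(\{z\})$ can hold, since otherwise one more deletion step would have been executed.

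For condition (1), the plan is to establish by induction the invariant $p\vdash tr(S_i)=p\vdash tr(D)$, so that at termination $D\subseteq p\vdash tr(D)=p\vdash tr(port_p(D))$. The nontrivial direction at the inductive step is $p\vdash tr(S_i)\subseteq p\vdash tr(S_{i+1})$. Writing $S_i=S_{i+1}\cup\{z\}$, I would first check (using the least $p$-neighborhood hypothesis to reduce accumulation at $x$ to the single test $U_{min}(x)\cap(\cdot)\neq\emptyset$) that $p\vdash CL_1$ distributes over finite unions, and then iterate to obtain $p\vdash tr(A\cup B)=p\vdash tr(A)\cup p\vdash tr(B)$. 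Hence
\begin{equation*}
p\vdash tr(S_i)=p\vdash tr(S_{i+1})\cup p\vdash tr(\{z\}).
\end{equation*}
Since $w\in S_{i+1}$ gives $z\in p\vdash tr(\{w\})\subseteq p\vdash tr(S_{i+1})$, monotonicity and idempotence yield $p\vdash tr(\{z\})\subseteq p\vdash tr(p\vdash tr(S_{i+1}))=p\vdash tr(S_{i+1})$, so the union on the right collapses and the invariant propagates.

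The principal obstacle is precisely the distributivity $p\vdash CL_1(A\cup B)=p\vdash CL_1(A)\cup p\vdash CL_1(B)$: without a least neighborhood this can fail, since a point $x$ might satisfy $U\cap(A\cup B)\neq\emptyset$ for every $p$-neighborhood $U$ of $x$ while failing that test separately against $A$ and against $B$. This is the one place the finiteness of $X$ (via the amendability to least $p$-neighborhoods) is genuinely used; the rest of the argument is essentially bookkeeping around this single fact.
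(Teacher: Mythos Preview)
Your argument is correct, but it is organized differently from the paper's. The paper does not run a deletion loop: it first quotients $D$ by the equivalence $y\equiv z\Leftrightarrow p\vdash tr(y)=p\vdash tr(z)$, then puts the partial order $[y]\leq_p[z]\Leftrightarrow z\in p\vdash tr(y)$ on $D/\!\!\equiv$, and defines $port_p(D)$ by choosing one representative from each \emph{minimal} class. Condition (1) then comes from the fact that in a finite poset every element lies above a minimal one, and condition (2) from minimality together with antisymmetry. Your greedy procedure is essentially a walk down this same partial order, so the two arguments are close cousins; the paper's version makes the structure of $port_p(D)$ (representatives of minimal $\equiv$-classes) explicit, while yours is more directly algorithmic.

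One simplification worth noting: your detour through distributivity of $p\vdash CL_1$ and idempotence of $p\vdash tr$ (and hence the least-$p$-neighborhood hypothesis) is not needed for the invariant. Since $z\in p\vdash tr(w)$ means $z\in p\vdash CL_n(\{w\})\subseteq p\vdash CL_n(S_{i+1})$ for some $n$, you get $S_i\subseteq p\vdash CL_n(S_{i+1})$ and then $p\vdash CL_m(S_i)\subseteq p\vdash CL_{m+n}(S_{i+1})$ for every $m$, so $p\vdash tr(S_i)\subseteq p\vdash tr(S_{i+1})$ directly. This is exactly the observation recorded just before Definition~\ref{symmtricType}, and it lets your proof go through without amending $\sigma$.
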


\begin{proof}
We define an equivalent relation $\equiv$ on $D$ such that $y\equiv z$ if and only if both $y\in (p\vdash tr(z))$ and 
$z\in (p\vdash tr(y))$ are true, or equivalently $p\vdash tr(y)=p\vdash tr(z)$

On the quotient $D/\equiv$, we define a partial order $\leq_p$ in the way that   
$[y]\leq_p [z]$ if and only if $z\in (p\vdash tr(y))$, where $[y]$ ($[z]$) represents the equivalent class in $D/\equiv$ that contains $y$ (resp., $z$). When $[y]\leq_p [z]$, we have $z\in (p\vdash tr(y))$, and $(p\vdash tr(z))\subseteq (p\vdash tr(y))$. If $y'\in [y]$ and $z'\in [z]$, then $z'\in (p\vdash tr(z')) = p\vdash tr(z)$ and $p\vdash tr(y')= p\vdash tr(y)$. Hence, 
$z'\in (p\vdash tr(z))\subseteq (p\vdash tr(y))=(p\vdash tr(y'))$, i.e., $z'\in (p\vdash tr(y'))$ is true.

 Let $D'/\equiv$ be the set of all minimum equivalent classes in $(D/\equiv, \leq_p)$.  For each equivalent class $[z]$ 
 in $D'/\equiv$, we choose a point $z\in [z]$, and form the set $port_p(D)$. 
 
 It is easy to show that for any point 
 $y\in D$, there exists a point $z\in port_p(D)$, such that $[z]\leq_p [y]$. Hence $y\in (p\vdash tr(z))$ and $D\subseteq (p\vdash tr(port_p(D))$. Therefore, the statement (1) is true.
 
 (2) follows directly from the definition of $port_p(D)$.
\end{proof}

When the topology is symmetrically typed, the set $port_p(D)$ will be a singleton of any point in $D$. It is interesting to see that quantitative data sets are usually symmetrically typed. However, it will not be much helpful to identify the starting points of any cluster in those spaces. 

In section 5, we will provide a way to equip a data set in $R^2$ with the so-called $left-r$, $right-r$, $up-left-r$ and $up-right-r$ types, and obtain a non-symmetrically typed topological space. With those types, the starting points for clusters can be calculated. 

It is worth noting that without the typed topology, one will use all possible open sets to study properties, which leads nowhere. Some properties become visible when we use only certain types of open sets.

In the following, we study how to separate clusters into individual $p\vdash tr(x_i)$'s.

\begin{definition}\label{def_closureConnected}
Let everything be as in Definition \ref{def_border}. Let $D, D'$ be two subsets of $X$, and let $p$ be a type. The sets $D$ and $D'$ are called 
$p$-$closure$ $disjoint$ if 
$(p\vdash tr(D))\cap (p\vdash tr(D'))=\emptyset$.
A subset $D\subseteq X$ is called $p$-$closure$ $connected$ if it cannot be partitioned into two non-empty subsets 
$E, E'$ such that $E$ and $E'$ are p-closure disjoint.    
$\Box$
\end{definition}

\begin{lemma}\label{closureCompCondition}
Let everything be as in Definition \ref{def_closureConnected}.
A set $D\subseteq X$ is $p$-closure connected if and only if for any two points $z, w\in D$, there exists a sequence $z=w_1, w_2, ..., w_t=w$ satisfying: (1) $w_i\in D$; and (2) $(p\vdash tr(w_i))\cap (p\vdash tr(w_{i+1}))\neq\emptyset$ is true for all $i=1, ..., t-1$. 
\end{lemma}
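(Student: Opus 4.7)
The statement is an equivalence; I plan to handle the two directions separately, with the reverse direction being essentially immediate and the forward direction carrying the real content.

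For the reverse direction (chain condition implies $p$-closure connectedness), I would argue by contradiction. Suppose the chain condition holds but $D=E\cup E'$ is a partition into nonempty $p$-closure disjoint pieces, so that $(p\vdash tr(E))\cap (p\vdash tr(E'))=\emptyset$. Picking any $z\in E$ and $w\in E'$, the hypothesis supplies a chain $z=w_1,\ldots,w_t=w$ inside $D$ whose consecutive transitive closures meet. Since $w_1\in E$ and $w_t\in E'$, there must exist an index $i$ with $w_i\in E$ and $w_{i+1}\in E'$; by monotonicity of $p\vdash tr$ (noted just before Definition \ref{symmtricType}), $p\vdash tr(w_i)\subseteq p\vdash tr(E)$ and $p\vdash tr(w_{i+1})\subseteq p\vdash tr(E')$, so their nonempty intersection contradicts the $p$-closure disjointness of $E$ and $E'$.

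For the forward direction, I would introduce on $D$ the relation $z\equiv z'$ meaning that a chain of the required sort connects $z$ to $z'$ within $D$. Reflexivity (use the one-term chain), symmetry (reverse the chain), and transitivity (concatenate two chains, noting that the consecutive tr-intersection condition is preserved) show that $\equiv$ is an equivalence relation. Assuming for contradiction that the chain condition fails, there are at least two equivalence classes; letting $E$ be one class and $E'=D\setminus E$, both are nonempty, and it suffices to verify that $E$ and $E'$ are $p$-closure disjoint, contradicting the $p$-closure connectedness of $D$.

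The main obstacle is establishing the identity $p\vdash tr(F)=\bigcup_{y\in F}(p\vdash tr(y))$ for any subset $F\subseteq X$, which is what lets us pass from closure disjointness of sets to the pointwise chain condition. Since $X$ is finite, we may assume the assignment function $\sigma$ has least $p$-neighborhoods (as noted just after Definition \ref{leastNeighborhood}); then $x\in p\vdash CL_1(F)$ iff $(p\vdash U_{min}(x))\cap F\neq\emptyset$ iff some $y\in F$ satisfies $x\in p\vdash CL_1(y)$, which gives $p\vdash CL_1(F)=\bigcup_{y\in F}(p\vdash CL_1(y))$; a routine induction on $n$ extends this to $p\vdash CL_n$ and hence to $p\vdash tr$. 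Armed with this identity, any common point $x\in (p\vdash tr(E))\cap (p\vdash tr(E'))$ would lie in some $p\vdash tr(y)$ with $y\in E$ and some $p\vdash tr(y')$ with $y'\in E'$; the two-term chain $y,y'$ inside $D$ then forces $y\equiv y'$, contradicting $y\in E$ and $y'\in E'$ and completing the argument.
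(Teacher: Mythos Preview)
Your proof is correct. The reverse direction is essentially identical to the paper's: both pick $z\in E$, $w\in E'$, run the chain until it first crosses from $E$ to $E'$, and derive a contradiction from monotonicity of $p\vdash tr$.

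For the forward direction you take a genuinely different route. The paper goes through the auxiliary construction $port_p(D)$ of Lemma~\ref{port}: given $z,w\in D$ it first finds $x_1,x_2\in port_p(D)$ with $z\in p\vdash tr(x_1)$ and $w\in p\vdash tr(x_2)$, then argues that the set of port points chain-reachable from $x_1$ must be all of $port_p(D)$ (else one could split $D$), and finally bookends the port chain with $z$ and $w$. Your argument bypasses $port_p(D)$ entirely by working directly with the chain-equivalence relation on $D$ and showing that any equivalence class $E$ and its complement are $p$-closure disjoint. The crux of your approach is the identity $p\vdash tr(F)=\bigcup_{y\in F}p\vdash tr(y)$, which you isolate and prove explicitly (via least $p$-neighborhoods and induction on $n$); the paper uses the same identity tacitly when passing from ``$(p\vdash tr(z))\cap(p\vdash tr(y))=\emptyset$ for every $y\in D_1$'' to ``$(p\vdash tr(z))\cap(p\vdash tr(D_1))=\emptyset$''. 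Your route is more self-contained and arguably cleaner, since it avoids the detour through Lemma~\ref{port}; the paper's route has the mild advantage of reusing machinery it needs elsewhere anyway.
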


\begin{proof}
($\rightarrow$) Assume that $D$ is $p$-closure connected. For any two points $z, w\in D$, there 
exists $x_1, x_2\in port_p(D)$ satisfying $z\in (p\vdash tr(x_1))$ and $w\in(p\vdash tr(x_2))$. Set $D_1=\{y\in port_p(D):~ there~ exists~ x_1=w_1, w_2, ..., w_t=y~satisfying: (1) w_i\in port_p(D); ~and ~(2)  (p\vdash tr(w_i))\cap(p\vdash tr(w_{i+1}))\neq\emptyset ~for~i<t\}$. From the definition of $D_1$, one can see that, if $z\in port_p(D)\setminus D_1$, then $(p\vdash tr(z))\cap (p\vdash tr(y))=\emptyset$ is true for every $y\in D_1$. Hence $(p\vdash tr(z))\cap (p\vdash tr(D_1))=\emptyset$. The random choice of $z$ implies that $(p\vdash tr(port_p(D)\setminus D_1))\cap (p\vdash tr(D_1))=\emptyset$, which implies $port_p(D)\setminus D_1=\emptyset$ and $x_2\in D_1$. 

($\leftarrow$) Assume for a contradiction that $D$ is not $p$-closure connected. 
Then $D$ can be split into $D=D_1\cup D_2$ such that 
$(p\vdash tr(D_1))\cap(p\vdash tr(D_2))=\emptyset$. Let $z\in D_1$ and $w\in D_2$. There exists
 $z=w_1, w_2, ..., w_t=w$ satisfying : (1) $w_i\in D$; and (2) 
 $(p\vdash tr(w_i))\cap (p\vdash tr(w_{i+1}))\neq\emptyset$ is true for all $i<t$. 
 Set $i_0=max\{i: w_j\in D_1~is~true~for~all~j\leq i\}$. Then 
 $w_{i_0}\in D_1$ and $w_{i_0+1}\in D_2$. By the definition of $D_1, D_2$, we have 
 $(p\vdash tr(w_{i_0}))\cap (p\vdash tr(w_{i_0+1}))=\emptyset$, which is a contradiction.

\end{proof}

  \begin{remark}
A $p$-closure connected set may not be type-$p$ connected. For example, in a straight space $X$, for any two points $x, y$ satisfying $y\in (p\vdash Track_2(x))$, the set $D=\{x, y\}$ is not type-$p$ connected. It is however $p$-closure connected. 
   On the other hand, a type-$p$ connected set is also $p$-closure connected.
$\Box$
\end{remark}

\begin{lemma}\label{p-closure-decom}
Let everything be as in Definition \ref{def_closureConnected}. For any subset $D\subseteq X$ and a type $p\in P$, there is a unique partition $D=D_1\cup D_2\cup ...\cup D_t$ such that each $D_i$ is $p$-closure connected, and the union of any two or more $D_i$'s will be $p$-closure disconnected.
\end{lemma}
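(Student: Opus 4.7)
The natural approach is to define an equivalence relation $\sim$ on $D$ using the path condition of Lemma \ref{closureCompCondition}: declare $z \sim w$ if and only if there is a finite sequence $z = w_1, w_2, \ldots, w_t = w$ in $D$ with $(p\vdash tr(w_i))\cap(p\vdash tr(w_{i+1}))\neq\emptyset$ for every $i<t$. Reflexivity is witnessed by the trivial one-term sequence, symmetry by reversing the chain, and transitivity by concatenating two chains at their shared endpoint. I would then take $D_1,\ldots,D_t$ to be the $\sim$-equivalence classes of $D$.

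For the existence half of the statement, each $D_i$ is $p$-closure connected by direct appeal to Lemma \ref{closureCompCondition}: any two points of $D_i$ are linked by a $\sim$-chain in $D$, and every intermediate vertex is forced to lie in $D_i$ since adjacent vertices of the chain are themselves $\sim$-related via a length-two chain. For disjointness of distinct classes, suppose $D_i\neq D_j$ and pick $z\in D_i$, $w\in D_j$; if $(p\vdash tr(z))\cap(p\vdash tr(w))\neq\emptyset$, then the two-term chain $z,w$ would witness $z\sim w$, a contradiction. The same two-term-chain observation shows that any union $D_{i_1}\cup\cdots\cup D_{i_k}$ with $k\geq 2$ splits as $D_{i_1}$ versus $D_{i_2}\cup\cdots\cup D_{i_k}$ into $p$-closure disjoint pieces, so this union is $p$-closure disconnected.

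For uniqueness, let $D=E_1\cup\cdots\cup E_s$ be another partition with the stated properties. Each $E_l$ is $p$-closure connected, so Lemma \ref{closureCompCondition} makes any two of its points $\sim$-equivalent, placing $E_l$ inside a single class $D_{\phi(l)}$. Conversely, if some $D_j$ contained two or more distinct $E_l$'s, then $D_j$ would be expressed as a union of two or more pieces of the $E$-partition and hence be $p$-closure disconnected by the assumed property of that partition, contradicting that $D_j$ itself is $p$-closure connected. Hence $\phi$ is a bijection and the two partitions coincide. The only delicate point in the whole argument is the internal-path observation in the existence step: one needs the chain certifying $\sim$ inside $D$ to stay inside the class $D_i$, which follows because adjacency in the chain is itself an instance of $\sim$.
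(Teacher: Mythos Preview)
Your proof is correct and follows essentially the same approach as the paper: both build the partition from the equivalence classes of the chain relation coming from Lemma~\ref{closureCompCondition} (the paper phrases this iteratively, picking $d_1\in D$, forming its class $D_1$, then picking $d_2\in D\setminus D_1$, etc.). Your version is in fact more complete than the paper's, which stops after describing the construction; you go on to verify that each class is $p$-closure connected, that unions of two or more classes are $p$-closure disconnected, and that the partition is unique, none of which the paper spells out.
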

\begin{proof}
We start with any point $d_1\in D$. Set $D_1=\{w\in D: \exists w_1=d_1, w_2, ..., w_m=w~satisfying~: (1) w_i\in D; and (2)(p\vdash tr(w_i))\cap (p\vdash tr(w_{i+1}))\neq\emptyset~ is~ true~ for~ all~ i<m-1\}$, as in Lemma \ref{closureCompCondition}. Then, 
we choose a point $d_2\in D\setminus D_1$ and define $D_2$ in the same way as $D_2$. The process continues until all points in $D$ are arranged. 
\end{proof}

\begin{example}\label{figure1b}
  Figure \ref{starightSurgery}(b) shows another cluster $D'$. The arrow, for instance from $x$ to $x_1$, indicates the relationship $x_1\in(p\vdash CL_1(x))$.  
  The type $p$ means  directed link with length $p=1.01$. The two points $x,y$ are the starting points, i.e., $port_p(D') = \{x, y\}$. The set $D'$ 
  is $p$-closure connected. It is a mixture of two clusters $p\vdash tr(x)$ and $p\vdash tr(y)$. 
  
  The point $z$ is inside both $p\vdash Track_2(y)$ and $p\vdash Track_2(x)$. The point $z'$ is in both $p\vdash Track_5(y)$ and $p\vdash Track_8(x)$. It is not a non-straight situation as defined in Definition \ref{straight}, since $x\neq y$. However, the point 
  $z'$ also posts a problem when we try to indexing the point starting from both $x$ and $y$.  
  $\Box$
\end{example}

To solve the problem created by the point $z'$ in above Example \ref{figure1b}, we will investigate a new method, called "surgery". Use this new method, we can separate the cluster $D'$ in Example \ref{figure1b} into disjoint $p\vdash tr(x)$ and $p\vdash tr(y)$.

\begin{definition}\label{surgery}
Let $(X, \mathcal T, P,\leq, \{\sigma_x: x\in X\})$ be a typed topological space. For any two distinct points $y\neq z\in X$ and any type $p\in P$,
\begin{enumerate}
\item The $p$-$cut$ at $(z,y)$, denoted $Cut(z, y)$, is the modification of the least neighborhood 
$p\vdash U_{min}(z)$ to $p\vdash_{Cut(z,y)} U_{min} (z):=(p\vdash U_{min}(z))\setminus \{y\}$. Hence, $z\notin (p\vdash_{Cut(z, y)} CL_1 (y))$.

\item When both $z\notin (p\vdash tr(y))$ and $y\notin (p\vdash tr(z))$ hold true, the $p$-$surgery$ on $(z,y)$, denoted $S(z,y)$ (or simply $S$), is 
the process to modify
 the least neighborhood $p\vdash U_{min}(w)$ for each $w\in (p\vdash tr(y))\cap (p\vdash tr(z))$, 
 to $p\vdash_S U_{min}(w) := (p\vdash U_{min}(w))\setminus [(p\vdash tr(z))\setminus (p\vdash tr(y))]$.

\end{enumerate}
 For any point $y\in X$, the notation $p\vdash_{S} tr(y)$ denotes the cluster $p\vdash tr(y)$ after the surgery $S$.
$\Box$
\end{definition}

 \begin{example}
 In Figure \ref{starightSurgery}(a), the two points $(z, x_4)$ can have a cut. After the $Cut(z, x_4)$, the arrow from $x_4$ to $z$ will be removed, i.e., $z\notin (p\vdash_{Cut(z, x_4)} CL_1 (x_4)$.

 In Figure \ref{starightSurgery}(b), the two points $x, y$ satisfy the conditions for a surgery. One can calculate that $(p\vdash tr(x))\cap(p\vdash tr(y))=\{z, z', z'_1, z'_2, z'_3\}$. The surgery $S$ at $(y,z)$ consists of two cuts at $(y_1, z)$ and $(w_3, z')$. Before the surgery $S$, we have $p\vdash tr(y) = \{y, y_1, z, w_1, w_2, w'_1, w'_2, w_3, z', z'_1, z'_2, z'_3\}$, and $p\vdash tr(x)=\{x, x_1, z, x'_1, x'_2, ..., x'_7, z',$ $z'_1, z'_2, z'_3\}$
 After the surgery $S$, we have $p\vdash_S tr(y) = \{y, y_1, w_1, w_2, w_3, w'_1, w'_2\}$, while $p\vdash_S tr(x)=p\vdash tr(x)$.
$\Box$
\end{example}

The following proposition is obvious.

\begin{proposition}\label{afterSurgery}
Let everything be as in Definition \ref{surgery}. Let $S$ be the $p$-surgery on $(z, y)$. 
Then, (1) $p\vdash_S tr(y)=p\vdash tr(y)$; (2) $(p\vdash_S tr(z))\cap (p\vdash_S tr(y))=\emptyset$; and (3)
$(p\vdash_S tr(y))\cup(p\vdash_S tr(z)) = (p\vdash tr(y))\cup(p\vdash tr(z))$.
$\Box$
\end{proposition}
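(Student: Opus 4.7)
The plan is to prove the three claims in order, exploiting two general observations about the surgery $S$. First, since $S$ only shrinks least neighborhoods ($p\vdash_S U_{min}(w) \subseteq p\vdash U_{min}(w)$ for every $w$), we automatically get $p\vdash_S tr(A) \subseteq p\vdash tr(A)$ for every $A \subseteq X$, so every ``$\subseteq$'' direction is free. Second, the only neighborhoods altered are those of points in the intersection $(p\vdash tr(y)) \cap (p\vdash tr(z))$, and the only points ever deleted from any such neighborhood lie in $(p\vdash tr(z)) \setminus (p\vdash tr(y))$. Throughout, I will describe membership in $p\vdash tr(a)$ via a chain $a = v_0, v_1, \ldots, v_n$ satisfying $v_{i-1} \in p\vdash U_{min}(v_i)$, which matches the iterated definition of $p\vdash CL_n$.

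For (1), the interesting inclusion is $p\vdash tr(y) \subseteq p\vdash_S tr(y)$. Given a chain $y = v_0, \ldots, v_n = v$ witnessing $v \in p\vdash tr(y)$, every $v_i$ lies in $p\vdash tr(y)$ and therefore outside $(p\vdash tr(z)) \setminus (p\vdash tr(y))$. Consequently $v_{i-1}$ is never among the points removed from $p\vdash U_{min}(v_i)$, so the same chain survives in the post-surgery structure.

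For (2), suppose toward contradiction some $v \in p\vdash_S tr(z) \cap p\vdash_S tr(y) = p\vdash_S tr(z) \cap p\vdash tr(y)$. Take a post-surgery chain $z = v_0, \ldots, v_n = v$ and let $k$ be the least index with $v_k \in p\vdash tr(y)$. The hypothesis $z \notin p\vdash tr(y)$ guarantees $k \geq 1$, and $v_k$ lies in the intersection, so $p\vdash_S U_{min}(v_k)$ has been pruned of $(p\vdash tr(z)) \setminus (p\vdash tr(y))$. But $v_{k-1} \in p\vdash_S U_{min}(v_k)$ and, by minimality of $k$ together with $v_{k-1} \in p\vdash_S tr(z) \subseteq p\vdash tr(z)$, we have $v_{k-1} \in (p\vdash tr(z)) \setminus (p\vdash tr(y))$, contradicting that $v_{k-1}$ was removed.

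For (3), the inclusion $\subseteq$ is immediate from observation one, and (1) already places $p\vdash tr(y)$ inside the left-hand side, so it suffices to show $v \in p\vdash_S tr(z)$ for every $v \in (p\vdash tr(z)) \setminus (p\vdash tr(y))$. The main obstacle is to argue that some original chain $z = v_0, \ldots, v_n = v$ can be taken entirely inside $(p\vdash tr(z)) \setminus (p\vdash tr(y))$, for then none of its vertices lie in the intersection, all their least neighborhoods are preserved by $S$, and the chain transports to the post-surgery structure. The key monotonicity fact is that $v_{i-1} \in p\vdash U_{min}(v_i)$ forces $v_n \in p\vdash tr(v_i)$ (apply idempotency, using $v_j \in p\vdash tr(v_{j-1})$ at each step after $i$), so if any intermediate $v_i$ belonged to $p\vdash tr(y)$ then $p\vdash tr(v_i) \subseteq p\vdash tr(y)$ would force $v \in p\vdash tr(y)$, contradicting the choice of $v$. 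Thus every chain from $z$ to $v$ automatically avoids $p\vdash tr(y)$, the surgery is invisible along it, and (3) follows.
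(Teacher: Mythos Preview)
Your proof is correct. The paper declares this proposition ``obvious'' and gives no argument at all, so there is nothing to compare against; your chain-based treatment of all three parts is exactly the kind of verification the paper is tacitly leaving to the reader, and it goes through without issue (in particular, your use of idempotency of $p\vdash tr$ in part~(3) is legitimate since Definition~\ref{surgery} already presupposes least $p$-neighborhoods).
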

 
 \begin{theorem}\label{cutForStraight}
 Let $(X,\mathcal T, P, \leq, \{\sigma_x: x\in X\})$ be a typed topological space. For any $x\in X$ and $p\in P$, there is sequence of cuts (as in Definition \ref{surgery}), denoted $SC$,  so that in the resulting space, $p\vdash_{SC} tr(x)$ is straight.
 \end{theorem}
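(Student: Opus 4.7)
The plan is to apply one $p$-cut for each pair that witnesses a failure of straightness in the original cluster, and then verify that the resulting space is straight. Since $X$ is finite, by the remark following Definition \ref{leastNeighborhood} we may assume that $\sigma$ has least $p$-neighborhoods, so $y \in p \vdash CL_1(\{z\})$ is equivalent to $z \in p \vdash U_{min}(y)$. It is convenient to picture $p \vdash tr(x)$ as a directed graph with an arrow $a \to b$ whenever $a \in p \vdash U_{min}(b)$; then $y \in p \vdash Track_i(x)$ exactly when the shortest directed path from $x$ to $y$ has length $i$, and $Cut(y, z)$ simply deletes the single arrow $z \to y$.

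Concretely, I would let
\[ V = \{(y, z) : y \in p \vdash Track_i(x),\ z \in p \vdash Track_j(x),\ 0 < i < j,\ z \in p \vdash U_{min}(y),\ y \notin p \vdash U_{min}(z)\} \]
be the set of all pairs witnessing a failure of straightness in $p \vdash tr(x)$, and take $SC$ to be the sequence that applies $Cut(y, z)$ for every $(y, z) \in V$. Finiteness of $X$ ensures that $V$ is finite.

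The crucial lemma I would prove is Track-preservation: $p \vdash_{SC} Track_k(x) = p \vdash Track_k(x)$ for every $k$. The reason is that each arrow removed by $SC$ is a \emph{backward} arrow---one running from a higher Track to a lower one---and such arrows never lie on a shortest path from $x$: along any shortest path to $y$, each step must raise the Track index by exactly $1$, whereas $z \to y$ would drop it from $j$ to $i < j$. Equivalently, for each $y \in p \vdash Track_i(x)$ some witness $u \in p \vdash U_{min}(y)$ of Track strictly less than $i$ must exist, and since any $z$ paired with $y$ in $V$ has Track $j > i$, the witness $u$ is never among the removed elements. Hence no point's shortest-path distance from $x$ changes and the Track decomposition is intact.

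Granted Track-preservation, straightness of $p \vdash_{SC} tr(x)$ follows by a direct contradiction. Suppose some pair $(y, z)$ with $y \in Track_i$, $z \in Track_j$, $0 < i < j$, still witnesses failure after $SC$. Since cuts only shrink $U_{min}$-sets, $z \in p \vdash_{SC} U_{min}(y)$ forces $z \in p \vdash U_{min}(y)$ in the original; and since no cut of the form $Cut(z, y)$ was ever performed (that would demand $(z, y) \in V$, but $(z, y)$ has the Track ordering the wrong way round), $y \notin p \vdash_{SC} U_{min}(z)$ forces $y \notin p \vdash U_{min}(z)$ originally. Thus $(y, z) \in V$, so $Cut(y, z)$ lies in $SC$ and must have removed $z$ from $p \vdash U_{min}(y)$---contradicting $z \in p \vdash_{SC} U_{min}(y)$. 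The main obstacle is precisely the Track-preservation step: one could fear that cascading cuts reshuffle the Track hierarchy and spawn fresh violations, but the backward-arrow observation shuts that worry down and lets all cuts be executed simultaneously without interference.
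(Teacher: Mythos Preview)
Your construction of $SC$ coincides exactly with the paper's: the paper defines $Check(y)=\{z\in (p\vdash U_{min}(y))\setminus (p\vdash CL_i(x)): y\notin (p\vdash U_{min}(z))\}$ for $y\in p\vdash Track_i(x)$ and takes $SC=\bigcup_y\{(y,z):z\in Check(y)\}$, which is precisely your set $V$. Your Track-preservation lemma and the concluding contradiction argument go beyond the paper, which simply asserts that $p\vdash_{SC} tr(x)$ is straight without justification; your backward-arrow observation is the right way to fill that gap, so the proposal is correct and in fact more complete than the original.
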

 
 \begin{proof}
For any $i<|p\vdash Track (x)|$, and any  $y\in (p\vdash Track_i(x))$. Set 
$Check(y) = \{z\in (p\vdash U_{min} (y))\setminus (p\vdash CL_i(x)): y\notin (p\vdash U_{min} (z))\}$.
If $Check(y)\neq\emptyset$, then set $cut(y)=\{(y, z): z\in Check(y)\}$. Let $SC=\bigcup\{cut(y): y\in (p\vdash tr(x))\}$. The set $SC$ can be arranged in any order. We perform the cut at each pair inside $SC$. The set $p\vdash_{SC} tr(x)$ will be straight in the resulting space.
 \end{proof}

  A cluster $D$ like  what is shown in Figure \ref{starightSurgery}(b) has starting points $port_p(D)=\{x, y\}$. If we choose $x$ as a reference point for $D$, then the point $y$ surrounds $x$, i.e., 
  $(p\vdash tr(y))\cap(p\vdash tr(x))\neq\emptyset$. In general,  there may be points, e.g., 
  $z$ surrounding $y$, i.e., 
  $(p\vdash tr(y))\cap(p\vdash tr(z))\neq\emptyset$ while $(p\vdash tr(x))\cap(p\vdash tr(z))=\emptyset$, and points surrounding $z$ and so on. 
 
 We want to perform a sequence of surgeries $PS$ so that, after those surgeries,  $(p\vdash_{PS} (y))\cap (p\vdash_{PS}(z))=\emptyset$ is true for every two distinct points $y, z\in port_p(D)$. 
 
 Another consideration is that when doing surgery on $(y, x)$ with $y$ surrounding $x$, we want to keep the set $p\vdash tr(x)$ intact.  After the surgeries, the set $p\vdash tr(x)$ will not change, while links from $(p\vdash tr(x))\cap(p\vdash tr(y))$ to $p\vdash tr(y)$ are removed, which means that $p\vdash_{PS} tr(y)=(p\vdash tr(y))\setminus [(p\vdash tr(y))\cap (p\vdash tr(x))]$. Then, we remove the common part of $p\vdash_{PS} tr(y)$ and $p\vdash tr(z)$ from $p\vdash tr(z)$ if $z$ surrounding $y$, and so on.  
  
  We first introduce the following sequence of surgeries, called "separation surgeries".
  
\begin{definition}\label{separationSurgery}
Let $(X, \mathcal T, P,\leq, \{\sigma_x: x\in X\})$ be a typed topological space. Given a type $p\in P$,  for any ordered sequence of points $<y_1, y_2, ..., y_n>$. The $p$-$separation$ $surgeries$ $SS$ on that sequence is the sequence of $p$-surgeries  $<(y_2, y_1),$ $(y_3, y_1), ..., (y_n, y_1), (y_3, y_2)$ $,..., (y_n, y_2), ..., (y_4, y_3),$ $...,$ $(y_n, y_3)$ $,..., (y_{n-1}, y_n)>$. 

Further, when $y_i=y_j$ for some $i<j$, the $p$-surgery $(y_j, y_i)$ is the removal of $y_j$ from the original sequence of points $<y_1, y_2, ..., y_{j-1},$ 
 $y_{j+1},$ $...,$  $y_n>$.
$\Box$
\end{definition}

Our definition of "$p$-surgery" in Definition \ref{surgery} will not be performed on such pairs of points as $(y,y)$, or $(z, y)$ when $z\in (p\vdash tr(y))$, since in both cases, the first point in the pair will disappear. However, in above definition of $p$-separation surgeries, we do allow repetitions of a point $y$ in an ordered sequence. In that case, we will consider a $p$-surgery,  such as $(y, y)$,  
a removal of other occurrences of  that point $y$ from the ordered sequence. 

The following proposition is obvious.
\begin{proposition}\label{SS}
Let everything be as in Definition \ref{separationSurgery}.
After the $p$-separation surgeries $SS$, the property $(p\vdash_{SS} tr(z))\cap(p\vdash_{SS} tr(w))=\emptyset$ holds  for all distinct $z, w\in\{y_1, y_2, ..., y_n\}$. 
$\Box$
\end{proposition}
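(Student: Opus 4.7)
The plan is to induct on rounds, where round $i \in \{1, \ldots, n-1\}$ executes the surgeries $(y_{i+1}, y_i), (y_{i+2}, y_i), \ldots, (y_n, y_i)$ in the order prescribed by Definition \ref{separationSurgery}. I would maintain the following invariant after round $i$: (a) $p\vdash tr(y_k) \cap p\vdash tr(y_\ell) = \emptyset$ for every distinct $k, \ell$ with $\min(k, \ell) \leq i$, and (b) each $p\vdash tr(y_k)$ with $k \leq i$ has been unchanged since the end of round $k$. Specialising to $i = n-1$ then gives the proposition.

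The backbone of the argument is a simple monotonicity observation: any $p$-surgery only deletes elements from certain $U_{min}(w)$'s, and since $p\vdash CL_1(A) = A \cup \{x : U_{min}(x) \cap A \neq \emptyset\}$ is monotone in the $U_{min}$'s, every $p\vdash tr(y)$ can only shrink or stay the same under a surgery. It is cleaner to phrase this via reachability: $x \in p\vdash tr(y_k)$ if and only if there is a directed path $x = x_0, x_1, \ldots, x_m = y_k$ with $x_{t+1} \in U_{min}(x_t)$ for each $t$, and restricting the $U_{min}$'s can only destroy such paths. Two corollaries drop out: the surgery precondition $y_j \notin p\vdash tr(y_i)$ and $y_i \notin p\vdash tr(y_j)$, which holds initially (for instance when the $y_i$'s come from $port_p(D)$ by Lemma \ref{port}), is preserved throughout the process, so every surgery in the sequence is legal; and any disjointness already established is never broken by later surgeries.

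For the inductive step from $i$ to $i+1$, each surgery $(y_j, y_{i+1})$ preserves $p\vdash tr(y_{i+1})$ and produces $p\vdash tr(y_j) \cap p\vdash tr(y_{i+1}) = \emptyset$ directly from Proposition \ref{afterSurgery}. The main obstacle is verifying that the earlier clusters $p\vdash tr(y_k)$ with $k \leq i$ remain \emph{literally unchanged} during round $i+1$; once this is settled, the disjointness of $p\vdash tr(y_k)$ from the (only shrinking) $p\vdash tr(y_\ell)$ for $\ell > i+1$ is automatic from invariant (a). The stability of $p\vdash tr(y_k)$ goes as follows: round $i+1$ modifies $U_{min}(w)$ only for $w \in p\vdash tr(y_{i+1}) \cap p\vdash tr(y_j)$; by invariant (a), $p\vdash tr(y_{i+1}) \cap p\vdash tr(y_k) = \emptyset$, so each such $w$ lies outside $p\vdash tr(y_k)$; by the reachability picture, no path from a point of $p\vdash tr(y_k)$ to $y_k$ can traverse such a $w$ (it would force $w \in p\vdash tr(y_k)$), so no relevant path is destroyed and $p\vdash tr(y_k)$ is preserved exactly.

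Finally, the degenerate case $y_i = y_j$ with $i < j$ is accommodated by the convention in Definition \ref{separationSurgery}—the pseudo-surgery $(y_j, y_i)$ simply deletes $y_j$ from the sequence. Since the conclusion ranges only over distinct \emph{values} in $\{y_1, \ldots, y_n\}$, the argument applies verbatim to the de-duplicated sequence, and the proof is complete.
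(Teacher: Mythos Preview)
Your argument is correct and, in fact, supplies considerably more detail than the paper does: the paper declares the proposition ``obvious'' and gives no proof whatsoever. Your round-by-round induction, with the monotonicity observation and the path-based verification that earlier clusters $p\vdash tr(y_k)$ are literally untouched in later rounds, is exactly the content one would need to fill in that word ``obvious''. One small remark: your parenthetical about the surgery precondition holding ``for instance when the $y_i$'s come from $port_p(D)$'' is appropriate caution, since Definition~\ref{separationSurgery} is stated for an arbitrary sequence and the paper never explicitly imposes the hypothesis needed for each surgery $(y_j,y_i)$ to be well-defined; this is a gap in the paper's formulation rather than in your proof, and in the intended application (Definition~\ref{surroundingD0}) the sequence does arise from a port.
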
  
  
The surrounding relationship among points inside $port_p(D)$ can be represent in a general tree structure.   
  
\begin{definition}\label{surroundingD0}
Let $(X, \mathcal T, P, \leq, \{\sigma_x:x\in X\})$ be a typed topological space. Let $p$ be a type. For any $p$-closure connected set $D$, let $d_0\in D$ be a point. The set $D$ can be positioned on a general tree structure, called $Surrounding$ $Tree$ and denoted $ST(D)$, which satisfies the following conditions:
\begin{enumerate}
\item The point $d_0$ is at the root of the tree.  Set $L_0=\{d_0\}$.
\item The set of all points at level $i$ is denoted $L_i$.
\item For each node $d\in L_i$, the set of its children, denoted $Children(d)$, is the set $\{e\in D\setminus (L_0\cup L_1\cup...\cup L_{i-1}): (p\vdash tr(e))\cap (p\vdash tr(d))\neq\emptyset\}$.
\item Set $L_{i+1}=\bigcup\{Children(d): d\in L_i\}$.
\item The $p$-surgeries at each node $d$ is $PS_i(d)=\{(e, d): e\in Children(d)\}$ .
The $p$-surgeries   $PS_i$ at level $i$  is the sequence of $p$-surgeries of $\{PS_i(d): d\in L_i\}$ plus the $p$-separation surgeries on $L_{i+1}$.
\end{enumerate}
$\Box$
\end{definition}  
  
  For two points $d, e$ such that $e\in Children (d)$ and $d\in L_i$, it may happen that $e\in Children (d')$ for another point $d'\in L_i$. Hence the point $e$ may repeats inside $L_{i+1}$. After the $p$-surgeries $PS_i(d)$ and $PS_i(d')$, we will conduct the $p$-separation surgery on $L_{i+1}$. The point $e$ will be eliminated from one of $d$ and $d'$. 
  For any point $d'\in L_i$, if $e\notin Children (d')$, by definition of $Children(d')$, we have $(p\vdash tr(e))\cap(p\vdash tr(d'))=\emptyset$.
  
  Further, for two points $d\in L_i$ and $d'\in L_j$ with $j>i$, we have $Children(d)\cap  Children (d')$ $=\emptyset$. 
  
  This proves (3) of the following lemma. Item (1) and (2) are obvious, since the surgeries do not lose any points.  
  
  \begin{lemma}\label{procedure}
  Let everything be as in Definition \ref{surroundingD0}. After the sequence of $p$-surgeries $PS$ of $<PS_0$, $PS_1$, ..., $PS_t>$, where $t$ is the height of the surrounding tree $ST(D)$, the following statements are true:
  \begin{enumerate}
  \item   each point $d\in D$ is placed at one and only one node in the tree;
  \item $p\vdash tr(D)=p\vdash_{PS} tr(D)$;
  \item $(p\vdash_{PS} tr(d))\cap (p\vdash_{PS} tr(d'))=\emptyset$ is true for every two distinct points $d,d'\in D$. 
  \end{enumerate}
  $\Box$ 
  \end{lemma}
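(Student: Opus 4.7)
The plan is to prove (1), (2), (3) in order, using as a guiding invariant the monotonicity observation that any single $p$-surgery only shrinks the least neighborhoods $p\vdash U_{min}$ and hence only shrinks every $p\vdash tr(w)$. This makes every disjointness obtained at some intermediate stage permanent, and reduces (3) essentially to pinpointing which stage first produces disjointness between any two given tree nodes.

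For (1), I would start from the recursion $Children(d)\subseteq D\setminus(L_0\cup\cdots\cup L_{i-1})$, which forbids a point from appearing at a level later than its first occurrence. Within a single level a point may be listed as a child of several parents in $L_i$, but the ``$y_i=y_j$'' clause in Definition \ref{separationSurgery} reinterprets the surgery $(y_j,y_i)$ as deletion of the duplicate, so after the separation surgeries inside $PS_i$ each point of $L_{i+1}$ occupies exactly one node. For (2), each surgery $(z,y)$ satisfies $p\vdash_S tr(y)\cup p\vdash_S tr(z)=p\vdash tr(y)\cup p\vdash tr(z)$ by Proposition \ref{afterSurgery}(3); since every surgery is performed on a pair of points lying inside $D$, an induction on the length of the surgery sequence preserves the global union $\bigcup_{d\in D}p\vdash tr(d)=p\vdash tr(D)$.

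For (3) I would split on the levels $i\le j$ of two distinct nodes $d\in L_i$ and $d'\in L_j$. When $i=j$, Proposition \ref{SS} applied to the separation surgery on $L_i$ contained in $PS_{i-1}$ yields $p\vdash tr(d)\cap p\vdash tr(d')=\emptyset$ at that stage, which the shrinkage invariant then preserves to the end. When $j=i+1$ and $d'\in Children(d)$, the explicit surgery $(d',d)$ inside $PS_i(d)$ delivers disjointness via Proposition \ref{afterSurgery}(2). In every remaining case $d'\notin Children(d)$, and the uniqueness from (1) rules out the ``earlier level'' reason, so the defining condition of $Children(d)$ itself forces $p\vdash tr(d)\cap p\vdash tr(d')=\emptyset$ already in the original typed topology, and shrinkage again preserves the emptiness. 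The main obstacle is the bookkeeping between the tree, which is constructed once from the original $p\vdash tr$, and the surgeries, which continuously modify $p\vdash tr$: without the shrinkage invariant one would have to match, surgery by surgery, how each $Children(d)$ would read if recomputed against the current clusters, whereas the monotone-shrinkage observation lets the original reading suffice.
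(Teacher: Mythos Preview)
Your proposal is correct and follows essentially the same approach as the paper's own argument, which appears in the paragraph immediately preceding the lemma: the paper handles duplicates within a level via the separation-surgery clause, reads off disjointness from the defining condition of $Children(d)$ when $e\notin Children(d')$, and declares (1) and (2) ``obvious, since the surgeries do not lose any points.'' Your write-up is considerably more detailed---in particular, you make explicit the monotone-shrinkage invariant (each surgery only shrinks $p\vdash U_{min}$ and hence every $p\vdash tr(w)$), which the paper uses tacitly but never states; this is a genuine clarification rather than a different method.
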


\section{Local Horizontal Axis And Extension}

For any $x\in X$ and $p\in P$, we can define a local horizontal axis originated at $x$. The axis is defined on the set $p\vdash tr(x)$ (or $p\vdash Track (x)$).

\begin{definition}\label{axisTickMark}
Let $(X, \mathcal T, P, \leq, \{\sigma_x: x\in X\})$ be a typed topological space.
 We use the notation $index_{p, x}(y)$ to denote the assignment of a point $y\in (p\vdash tr(x))$ to a tick mark on a local horizontal axis. 
\begin{enumerate}
 \item  $index_{p, x}(x) = 0$;
 \item if $y\in(p\vdash Track_j(x))$, then $index_{p, x}(y)=j$. 
\end{enumerate}
$\Box$
\end{definition}

To extend the indexing to points outside $p\vdash tr(x)$, we will need another type $q$. 
In the following, we introduce the concept of "stronger type" and also a class of rather simple spaces, called "uniformly typed" space.
It turns out the indexing can be expanded smoothly to $q\vdash tr(x)$ for those spaces for $p<<q$.

\begin{definition} \label{uniformlyTyped}
Let everything be as in Definition \ref{axisTickMark}. Let $p<q $ be two types.
\begin{enumerate}
\item The type $q$ is called a $stronger$ $type$ than $p$, written as $p<< q$, if for all $x\in X$ and any $U, V\in domain(\sigma_x)$, $\sigma_x(U)=p$ and $\sigma_x(V)=q$ implies $U\subseteq V$;

\item  The space $X$ is called $(p, q)$-uniformly typed, if for any $j<|p\vdash Track(x)|$ and $k<|q\vdash Track(x)|$, the condition
$(p\vdash CL_j(x))\cap (q\vdash CL_k(x))\neq\emptyset$ implies $(p\vdash CL_j(x))\subseteq (q\vdash CL_k(x))$. 
$\Box$
\end{enumerate}

\end{definition}

When $p<<q$, one has $(p\vdash tr(x))\subseteq (q\vdash tr(x))$. 
For the set $q\vdash tr(x)$, points inside the subset $p\vdash tr(x)$ are well indexed. Due to the unpredictable "shape" of type $q$ neighborhoods, the set $(q\vdash tr(x))\setminus (p\vdash tr(x))$ need additional treatment. 

\begin{remark}\label{nonUniform}
There are typed topological spaces that are not $(p,q)$-uniformly typed. Let $X=\{(0,0), (1,0), (2, 0), (3,0), (4, 0), (1,1)(1,2), (0,2)\}$ be as in Example \ref{DBSCAN}. Let $p$ denote the type of open sets with radius $p=1.01$, and $q$ denote the type of open sets with radius $q=3.01$. Then, for $x=(0,0)$, the point $(0,2)$ is inside both $p\vdash CL_4(x)$ and $q\vdash CL_1(x)$, while the point $(4,0)$ is inside both $p\vdash CL_4(x)$ and $q\vdash CL_2(x)$. 

Data sets from the XY-plane can be equipped with "shaped" type open neighborhoods, which will provide the "direction" of $p$-closures, and reduce the chance for being non-uniformly typed. We will discuss this more in Section 5. 
$\Box$
\end{remark}

\begin{figure}
  \includegraphics[width=4in]{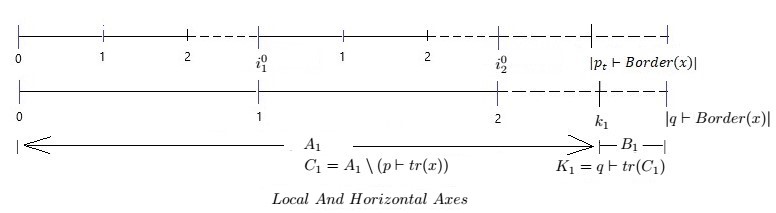}
  \caption{Combined indexing on uniformly typed spaces}
  \label{linearReg1}
  \end{figure}

For $(p,q)$-uniformly typed spaces, we define some notations. See also Figure \ref{linearReg1}.

\begin{definition}\label{pqIndex}
Let everything be as in Definition \ref{uniformlyTyped}. Assume that $X$ is $(p,q)$-uniformly typed, and $p<<q$. 
\begin{enumerate}
\item Set $k_1=min\{k: p\vdash tr(x)\subseteq (q\vdash CL_k(x))\}$;
\item set $i^1_1    =max\{i: (p\vdash CL_i(x))\subseteq (q\vdash CL_1(x))\}$, and inductively set
          $i^1_{j}=max\{i: (p\vdash CL_i(x))\subseteq (q\vdash CL_{j}(x))\}$ for $j=2,3,..., k_1$ 
          			and $i^1_{j}\leq |p\vdash Track (x)|$.
\item set $A_1=q\vdash CL_{k_1}(x)$;
\item set $B_1=(q\vdash tr(x))\setminus A_1$;
\item set $C_1=A_1\setminus (p\vdash tr(x))$;
\item set $K_1=p\vdash tr(C_1)$.
\end{enumerate}
$\Box$
\end{definition}

Above definitions are well-defined for $(p,q)$-uniformly typed spaces. 
   The set $q\vdash tr(x)$ is split into $A_1\cup B_1$.
The set $A_1$ is the first part of $q\vdash tr(x)$ that contains $p\vdash tr(x)$.

   The set $B_1$, on the combined horizontal axes, is after $p\vdash tr(x)$ (See figure \ref{linearReg1}). For $B_1$, the combined axes will start from a new "reference" point $x_2$ that acts like $x$, and the indexing will be extended to  $p\vdash tr(x_2)$. After that, we will need reference points $x_3, x_4, ...$, so that the indexing is extended to $p\vdash tr(x_3), p\vdash tr(x_4),...$. 

   The set $C_1$ can be considered as "parallel" clusters to $p\vdash tr(x)$ inside $q\vdash tr(x)$ that come along with $p\vdash tr(x)$. Those clusters may extend beyond $A_1$ and form $K_1$, and clusters outside $A_1$ may extend to $C_1$.

\begin{lemma}\label{combinedIndex}
Let everything be as in Definition \ref{pqIndex}, where $p<<q\in P$.
The two axes $index_{p, x}(y)$ and $index_{q, x}(y)$ can be combined into an $index_{p,q,x}(y)$ on $p\vdash tr(x)$.
\end{lemma}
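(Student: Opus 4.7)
The approach is to use the $(p,q)$-uniformly typed hypothesis to show that each $p$-track lies entirely inside a single $q$-track, so that the finer $p$-indexing on $p\vdash tr(x)$ completely determines, and is therefore compatible with, the coarser $q$-indexing on the same set. The combined index $index_{p,q,x}(y)$ is then essentially just the $p$-index of $y$, visualized on a single horizontal axis that carries the $q$-tick marks as a sub-scale placed at the breakpoints $i^1_1 \le i^1_2 \le \cdots \le i^1_{k_1}$.

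First I would record the basic bookkeeping: for each $0 < i \le |p\vdash Track(x)|$, set $k_i := \min\{k : p\vdash CL_i(x) \subseteq q\vdash CL_k(x)\}$. This set is non-empty because $p\vdash CL_i(x) \subseteq p\vdash tr(x) \subseteq q\vdash CL_{k_1}(x)$ by Definition \ref{pqIndex}, and the sequence $k_1 \le k_2 \le \cdots$ is non-decreasing since $p\vdash CL_i(x) \subseteq p\vdash CL_{i+1}(x)$. Moreover, unpacking the definition shows $i^1_j = \max\{i : k_i \le j\}$, so the $i^1_j$'s are precisely the positions at which $(k_i)$ jumps to a new level.

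The key step is to prove that for every $y \in p\vdash Track_i(x)$, one has $index_{q,x}(y) = k_i$. Writing $k_y = index_{q,x}(y)$, the point $y$ lies in $p\vdash CL_i(x) \cap q\vdash CL_{k_y}(x)$, which is non-empty, so the uniform-typed property forces $p\vdash CL_i(x) \subseteq q\vdash CL_{k_y}(x)$, whence $k_i \le k_y$. The reverse inequality $k_y \le k_i$ is immediate from $y \in p\vdash CL_i(x) \subseteq q\vdash CL_{k_i}(x)$. Hence $k_y = k_i$, and the $q$-index on $p\vdash tr(x)$ is a function of the $p$-index alone.

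With this compatibility established, I would simply set $index_{p,q,x}(y) := index_{p,x}(y)$ for $y \in p\vdash tr(x)$. On the combined axis in Figure \ref{linearReg1}, the $p$-tick marks $0, 1, 2, \ldots$ form the primary scale, and the $q$-tick mark at level $j$ is drawn at the position of the $p$-tick $i^1_j$, so that $y \in q\vdash Track_j(x)$ precisely when $i^1_{j-1} < index_{p,x}(y) \le i^1_j$ (with the convention $i^1_0 = 0$). The main obstacle is the well-definedness step in the previous paragraph: without the uniform-typed assumption different points of the same $p$-track could land in different $q$-tracks, and no single combined axis would exist.
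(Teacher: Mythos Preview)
Your argument is correct, and the core content matches the paper's: both rely on the $(p,q)$-uniformly typed hypothesis to place each $p\vdash Track_i(x)$ inside a unique $q\vdash Track_j(x)$, after which combining the two axes is just bookkeeping. The difference is purely in the choice of primary scale. The paper takes the $q$-axis as primary and subdivides each interval $[j-1,j]$ into $i^1_j-i^1_{j-1}$ equal pieces, writing the combined index in the decimal-like form $j.(i-i^1_{j-1})$ (equivalently $(k/q).(k\%q)$ in the paragraph following the proof). You take the $p$-axis as primary and set $index_{p,q,x}(y)=index_{p,x}(y)$, marking the $q$-ticks at the breakpoints $i^1_j$. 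These two encodings are interconvertible and carry identical information.

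What your version buys is an explicit verification of the compatibility step: you prove directly that $index_{q,x}(y)$ depends only on $index_{p,x}(y)$ via the uniform-typed inclusion, which is exactly what is needed for the paper's subdivision procedure to be well-defined but is left implicit there. One small notational collision to clean up: you use $k_i$ for $\min\{k:p\vdash CL_i(x)\subseteq q\vdash CL_k(x)\}$, so your $k_1$ is not the paper's $k_1$ from Definition~\ref{pqIndex}; when you invoke ``$p\vdash tr(x)\subseteq q\vdash CL_{k_1}(x)$ by Definition~\ref{pqIndex}'' you mean the paper's $k_1$, and it would be clearer to rename one of them.
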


\begin{proof}
  For the axis $index_{q, x}(y)$, we can partition it into intervals $[0,1], [1, 2]$, etc (see Figure \ref{linearReg1}). By definition, 
  $index_{q, x}(y)=1$ for all $y\in(q\vdash Track_1(x))$. 
  We can divide
  the interval $[0, 1]$ into $i^1_1$-many sub intervals of equal length, and for $y\in (p\vdash Track_i(x))$ with $i\leq i^1_1$, the combined index $index_{p,q,x}(y)$ is defined as $\frac{i}{i_1^1}$. Similarly, we can divide the interval [1, 2] into $i^1_2-i^1_1$ many sub intervals
  of equal length, and for $i^1_1<i\leq i^1_2$, we index $y\in(p\vdash Track_i(x))$ to $index_{p,q,x}(y)=1\frac{(i-i^1_1}{i_2^1-i_1^1})$. The process continues till all $y\in (p\vdash tr(x))$ have been defined in the combined index $index_{p,q,x}(y)$.
\end{proof}

For any point $y\in (p\vdash Track_k(x))$ for some positive integer $k$, we can calculate its combined indexing in the following way. The result is written as $(k/q).(k\%q)$. The number $k/q$ is calculated as the unique positive integer $t$
satisfying $i^1_{t}\leq k<i^1_{t+1}$. The number $k\%q$ is defined as
$k\%q=k-i^1_{k/q}$. 

 Conversely, for a point $y\in(p\vdash tr(x))$ with a combined indexing $s.t$, we can calculate the number $k$ so that $y\in(p\vdash Track_k(x))$.
  
To index points inside $A_1$, we first calculate the $port_p(A_1)$ as in Lemma \ref{port}. This $port_p(A_1)$ is further split into several $p$-closure connected sets as in Lemma \ref{p-closure-decom}. One of them, say $D$, contains a point $x'$ satisfying $x\in (p\vdash tr(x'))$. The point $x'$ is used as a reference point for $D$. For any other $p$-closure connected set, we also choose a point as a reference point. 

The indexing of a $p$-closure connected set $D$ with a reference point $d_0$ can be defined according to the surrounding tree $ST(D)$.

\begin{definition}\label{joint}
Let everything be as in Definition \ref{pqIndex}. Given a $p$-closure connected set $D\subseteq A_1$ with a reference point $d_0\in D$, assume that $index(p,q, x)(d_0)$ is defined. Let $PS$ be the sequence of surgeries $<PS_0, PS_1, ..., PS_t>$ defined in Definition \ref{surroundingD0}. The indexing of a point inside $p\vdash_{PS} tr(D)$ is defined as follows.

\begin{enumerate}
\item For each $d\in L_1$ (as in Definition \ref{surroundingD0}), 

set $n_{1}=min\{k: (p\vdash tr(d))\cap(p\vdash Track_k(d_0))\neq\emptyset\}$, and 

set $n_{2}=min\{k: (p\vdash Track_k(d))\cap(p\vdash Track_{n_1}(d_0))\neq\emptyset\}$. 

\noindent Then,

set $index_{p,q,x}(d)=(index(p,q, x)(d_0) + n_{1})-n_{2}$.

\noindent For every $e\in (p\vdash_{PS} Track_j (d))$ for some $j$, 

   set $index_{p,q,x}(e) = index_{p,q,x}(d)+j$.

\item Inductively, we assume that each point $d\in L_i$ (as in Definition \ref{surroundingD0}) is properly indexed after surgeries $PS$.
    For each child $e\in Children_{PS}(d)$, where $Children_{PS}(d)$ is the set of children nodes of the tree after surgeries $PS$, 
 
 set $m_{1}=min\{k: (p\vdash tr(e))\cap(p\vdash Track_k(d))\neq\emptyset\}$,  and 
 
 set $m_{2}=min\{k: (p\vdash Track_k(e))\cap(p\vdash Track_{m_1}(d))\neq\emptyset\}$. 

\noindent Then,

 set $index_{p,q,x}(e)=(index_{p,q,x}(d) + m_{1})-m_{2}$.

\noindent  For every $w\in (p\vdash_{PS} Track_j (e))$ for some $j$,

 set $index_{p,q,x}(w) = index_{p,q,x}(e)+j$.
 
\end{enumerate}
$\Box$
\end{definition}

According to Lemma \ref{procedure}, each point $d\in D$ is placed at one and only one node in the surrounding tree $ST(D)$. Hence the indexing defined in above Definition \ref{joint} is well-defined.

\begin{theorem}\label{A-indexing}
Let everything be as in Definition \ref{pqIndex}, where $p<<q\in P$. For a point $x\in P$ and types $p<q\in P$, there exists a sequence of $p$-surgeries $PS$ such that the combined indexing after surgeries $PS$, $index_{p,q,x}(y)$,  can be extended to $A_1\cup K_1$.  
\end{theorem}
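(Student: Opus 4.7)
The plan is to leverage Lemmas \ref{combinedIndex}, \ref{port}, \ref{p-closure-decom}, and \ref{procedure} together with Definition \ref{joint}. The indexing $index_{p,q,x}$ is already defined on $p\vdash tr(x)$ by Lemma \ref{combinedIndex}, so the task reduces to extending it over $(A_1\cup K_1)\setminus (p\vdash tr(x))$. Observe first that $A_1\cup K_1 = p\vdash tr(port_p(A_1))$: the inclusion $\supseteq$ follows because $A_1\subseteq p\vdash tr(port_p(A_1))$ by Lemma \ref{port} and because $C_1\subseteq A_1$ gives $K_1=p\vdash tr(C_1)\subseteq p\vdash tr(port_p(A_1))$, while the reverse inclusion follows by splitting any $y_0\in port_p(A_1)\subseteq A_1=(p\vdash tr(x))\cup C_1$ according to whether $y_0\in p\vdash tr(x)$ (so $p\vdash tr(y_0)\subseteq p\vdash tr(x)\subseteq A_1$) or $y_0\in C_1$ (so $p\vdash tr(y_0)\subseteq K_1$).

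Apply Lemma \ref{p-closure-decom} to decompose $port_p(A_1)$ into $p$-closure connected components $D^{(1)},\ldots, D^{(s)}$, so that $\bigcup_i p\vdash tr(D^{(i)}) = p\vdash tr(port_p(A_1)) = A_1\cup K_1$. Since $x\in A_1\subseteq p\vdash tr(port_p(A_1))$, there is some $x'\in port_p(A_1)$ with $x\in p\vdash tr(x')$; without loss of generality take $x'\in D^{(1)}$. Use $x'$ as the root of the surrounding tree $ST(D^{(1)})$ and, since $x\in p\vdash Track_m(x')$ for some $m\geq 0$, set $index_{p,q,x}(x'):=-m$, consistent with the already-assigned value $index_{p,q,x}(x)=0$. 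For each other component $D^{(i)}$ with $i\geq 2$, pick an arbitrary root $d_0^{(i)}\in D^{(i)}$ and assign it an arbitrary starting index (for instance, the unique $k$ with $d_0^{(i)}\in q\vdash Track_k(x)$). For each $i$ run the sequence of $p$-surgeries $PS^{(i)}=\langle PS^{(i)}_0,\ldots, PS^{(i)}_{t_i}\rangle$ of Definition \ref{surroundingD0} and propagate the indexing via the recursion of Definition \ref{joint}: from the root down to every child node of $ST(D^{(i)})$, and from each indexed node $d$ outward through $p\vdash_{PS^{(i)}} Track_j(d)$. Let $PS$ be the concatenation $\langle PS^{(1)},\ldots, PS^{(s)}\rangle$; every point of $\bigcup_i p\vdash tr(D^{(i)}) = A_1\cup K_1$ then receives an index.

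The main obstacle is well-definedness of the extended indexing: before surgery, a single point might belong to several $p\vdash tr(d)$'s for distinct nodes $d$ of the same $ST(D^{(i)})$, which would yield conflicting index values. This is resolved precisely by Lemma \ref{procedure}(1), which guarantees that after the surgery sequence $PS^{(i)}$ each point lies on a unique node of $ST(D^{(i)})$, while Lemma \ref{procedure}(2) ensures that no points are lost. The hypothesis of $(p,q)$-uniform typedness enters only upstream, through Definition \ref{pqIndex} and Lemma \ref{combinedIndex}, so that $A_1, B_1, C_1, K_1$ and the combined intervals $[i^1_{j-1}, i^1_j]$ are well-defined and the base indexing on $p\vdash tr(x)$ is consistent. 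Once these are in place, the extension reduces to the combinatorial tree-traversal recursion of Definition \ref{joint}.
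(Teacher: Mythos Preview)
Your proposal is correct and follows essentially the same approach as the paper: compute $port_p(A_1)$, decompose it into $p$-closure connected components via Lemma~\ref{p-closure-decom}, locate the component containing a point $x'$ with $x\in p\vdash tr(x')$, set $index_{p,q,x}(x')=-m$, and propagate via the surrounding-tree surgeries and Definition~\ref{joint}. Your argument is in fact slightly more complete than the paper's, since you explicitly verify $A_1\cup K_1 = p\vdash tr(port_p(A_1))$ (which the paper uses without comment) and you invoke Lemma~\ref{procedure} to justify well-definedness; the only cosmetic difference is that the paper assigns the reference points of the ``parallel'' components $D^{(i)}$ ($i\geq 2$) the index $0$ rather than your $q$-track index.
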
  

\begin{proof}
For the set $A_1$, by Lemma \ref{port}, there is a $port_p(A_1)$ such that 
(1) $A_1\subseteq p\vdash tr(port_p(A_1))$; and (2) 
  for every two distinct $z, w\in port_p(A_1)$, both $z\not\in (p\vdash tr(w))$ and $w\not\in (p\vdash tr(z))$ are true.

  By Lemma \ref{p-closure-decom}, this $port_p(A_1)$ can be partitioned into $A_1^1\cup A_1^2\cup...\cup A_1^t$ such that each $A_1^i$ is $p$-closure connected and the union of more than one  $A_1^i$'s will be $p$-closure disconnected.   
  
  Without loss of generality, we assume that inside $A_1^1$, there is a point $x'\in A_1^1$ satisfying $x\in (p\vdash tr(x'))$. The point $x'$ will be a reference point for $A_1^1$. According to Definition \ref{joint}, we have a sequence of surgeries $PS$ such that $p\vdash tr(x')$ stay intact, and all points inside $p\vdash tr(A_1^1)$ can be properly indexed after
  $x'$ is properly indexed. 
  
  Let $k$ be such that $x\in (p\vdash Track_k(x'))$. We set $index_{p,q,x}(x')=-k$. Hence points inside $A_1^1$ are indexed according to Definition \ref{joint}.

  For any other $A_1^i (i>1)$, 
   we can consider them as being "parallel with" $p\vdash tr(x)$. For that reason, we can choose a reference point $d_0$ and index it as $0$, i.e., $index_{p,q,x}(d_0)=0$, which is the same as $index_{p,q,x}(x)=0$. Points in $p\vdash tr(A_1^i)$ can be indexed according to Definition \ref{joint}. 
\end{proof}

Points inside $B_1$ may be much more complicate. For instance, there may be points $z\in B_1$ such that $(p\vdash tr(z))\cap port_p(A_1)\neq\emptyset$ or $(p\vdash tr(z))\cap (p\vdash tr(port_p(A_1)))\neq\emptyset$. We introduce another class of typed topological spaces.

\begin{figure}
  \includegraphics[width=4in]{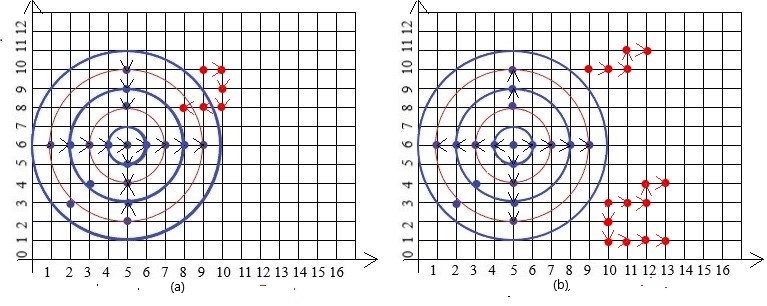}
  \caption{(p,q)-straight examples}
  \label{qpstraight}
  \end{figure}

\begin{definition}\label{pq-straight}
Let $(X, \mathcal T, P, \leq, \{\sigma_x: x\in X\})$ be a typed topological space. For any $x\in X$ and two types $p<q\in P$, the cluster $q\vdash tr(x)$ is called $(p, q)$-straight if for every three points $y, z, w\in (q\vdash tr(x))$ satisfying $z,w\in(p\vdash tr(y))$ the statement $index_{p, y}(z)\leq index_{p,y }(w)$ implies $index_{q,y}(z)\leq index_{q, y}(w)$.
$\Box$
\end{definition} 
\begin{example}
Figure \ref{qpstraight} shows an example of non $(p,q)$-straight space in (a),
and an example of $(p,q)$-straight space in (b). The point $x$ is $x=(5,6)$, and the two types $p,q$ are $p=1.01$ and $q=2.01$.
In both spaces, an arrow from one point $x_1$ to another point $x_2$ indicate the relationship $x_2\in (p\vdash CL_1(x_1))$.

In (a), 

$p\vdash tr(x)=\{(5,6), (6,6), (7,6), (8,6), (9,6), (5,5), (5,4)\}$. 

$k_1=2$, 

$A_1=\{(1,6),(2,3), (2,6), (3,4), (3, 6), (4, 6), (5,2), (5,3), (5,4), (5,5), (5,6), (5,8),$
 $(5, 9), (5, 10), (6,6), (7,6), (8, 6), (8,8), (9, 6) \}$. 

$port_p(A_1)=\{(1,6), (3,4), (5,2), (5, 10), (8, 8)\}$. 

$B_1=\{(2,3), (9, 8), (9, 10), (10, 8), (10, 9), (10, 10)\}$. 

$C_1=\{(1,6),(2,6), (3,4), (3, 6), (4, 6), (5,2), (5,3), (5,4), (5,8),
(5, 9), (5, 10),$ $(8,8)\}$.

"Parallel lines" inside $C_1$ are $\{\{(3,4)\}, \{(5,2), (5,3), (5,4)\}$, 
$\{(5,10), (5, 9),$ $(5,8)\}$  and $\{(8,8)\}$.

In (b), 

$p\vdash tr(x)=\{(1,6), (2,6), (3, 6), (4, 6), (5,2), (5,3), (5,4), (5,5), (5,6), (6,6), (7,6),$
 $(8, 6),(9, 6) \}$. 

$k_1=2$.

$A_1=\{(1,6),(2,6), (3,4), (3, 6), (4, 6), (5,2), (5,3), (5,4), (5,5), (5,6), (5,8),$
 $(5, 9),$  $(5, 10), (6,6), (7,6), (8, 6),  (9, 6) \}$.

$port_p(A_1) = \{(3,4), (5,6), (5, 8)\}$.

$B_1=\{(2,3), (9, 10), (10, 10), (11, 10), (11, 11), (12, 11), 
(13,1), (12,1), (11, 1),
(10,$  $ 1), (10,$  $2), (10, 3), (11, 3),
(12, 3), (12, 4), (13, 4)\}$

$C_1=\{(3,4), (5,8), (5, 9), (5, 10)\}$.

"Parallel lines" inside $C_1$ are $\{(3,4)\},$ and  
$\{(5,8), (5, 9),$ $(5,10)\}$.
$\Box$
\end{example}

The following lemma is obvious. It is needed to extend the index to $B_1$. 

\begin{lemma}
Let everything be as in Definition \ref{pq-straight}. Given a point $x\in X$ and two types $p<<q\in P$, assume that 
the cluster $X$ is $(p, q)$-uniformly typed, and the cluster $q\vdash tr(x)$ is $(p,q)$-straight. For any $y\in B_1$, we have $p\vdash tr(y)\subseteq B_1$.  
$\Box$
\end{lemma}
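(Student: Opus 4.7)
The plan is to split the desired containment $p\vdash tr(y)\subseteq B_1$ according to the definition $B_1=(q\vdash tr(x))\setminus A_1$: first show $p\vdash tr(y)\subseteq q\vdash tr(x)$, and then show $p\vdash tr(y)\cap A_1=\emptyset$.

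The first inclusion I would prove directly from monotonicity. Because $p\leq q$, the defining property of a typed topological space gives $p\vdash U_{min}(a)\subseteq q\vdash U_{min}(a)$ at every point $a$, so by induction $p\vdash CL_n(A)\subseteq q\vdash CL_n(A)$ for every $A$ and every $n$, hence $p\vdash tr(y)\subseteq q\vdash tr(y)$. Combined with the general fact (noted in the paragraph preceding Definition \ref{symmtricType}) that $y\in q\vdash tr(x)$ implies $q\vdash tr(y)\subseteq q\vdash tr(x)$, this yields $p\vdash tr(y)\subseteq q\vdash tr(x)$.

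For the second inclusion I would argue by contradiction. Suppose there is $z\in p\vdash tr(y)\cap A_1$, chosen to minimize $n:=index_{p,y}(z)$. Since $y\in B_1$ forces $n\geq 1$, minimality produces a predecessor $w\in p\vdash U_{min}(z)\cap p\vdash Track_{n-1}(y)$ lying in $B_1$. I would then invoke $(p,q)$-straightness at the center $y$, applied to $w$ and $z$ in $p\vdash tr(y)$ with $index_{p,y}(w)<index_{p,y}(z)$, to obtain $index_{q,y}(w)\leq index_{q,y}(z)$; and I would combine this with $(p,q)$-uniform typing at $y$, which pins each $p\vdash CL_j(y)$ inside a specific $q\vdash CL_{k_j}(y)$, to locate $z$ in a definite $q$-layer around $y$. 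The intended contradiction is then obtained by transferring this $q$-distance estimate from $y$-centered layers to $x$-centered layers, using $y\in q\vdash Track_m(x)$ with $m>k_1$, to conclude $index_{q,x}(z)>k_1$, contradicting $z\in q\vdash CL_{k_1}(x)$.

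The main obstacle is precisely this last transfer. Concatenation of $q$-paths yields only the upper estimate $index_{q,x}(z)\leq index_{q,x}(y)+index_{q,y}(z)$, which is useless for keeping $z$ \emph{outside} $A_1$. The needed \emph{lower} bound on $index_{q,x}(z)$ requires an exchange/symmetry property on $q$-neighborhoods that is clearly present in the Euclidean-disk examples of Figure \ref{qpstraight} but here must be extracted from the combined assumptions of $(p,q)$-uniform typing and $(p,q)$-straightness; carrying out that extraction in a purely typed, non-metric setting is where I expect the real work to lie, and is presumably why the author phrases this lemma as ``obvious'' only after both conditions are in force.
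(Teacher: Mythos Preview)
The paper supplies no proof of this lemma at all: it is introduced with the single sentence ``The following lemma is obvious'' and is immediately followed by Definition~\ref{DefReferencePoint}. There is therefore no argument in the paper to compare your proposal against.

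Your first step, obtaining $p\vdash tr(y)\subseteq q\vdash tr(x)$ from monotonicity of least neighborhoods under $p\leq q$ together with $q\vdash tr(y)\subseteq q\vdash tr(x)$, is correct and is the only part of the argument that is genuinely routine.

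The obstacle you flag in the second step is real, and the paper does nothing to resolve it. As written, Definition~\ref{uniformlyTyped} of $(p,q)$-uniform typing compares $p\vdash CL_j(x)$ with $q\vdash CL_k(x)$ at the \emph{same} base point, and Definition~\ref{pq-straight} of $(p,q)$-straightness compares $index_{p,y}$ with $index_{q,y}$ at a common center $y$. Neither hypothesis yields the cross-center lower bound $index_{q,x}(z)>k_1$ that you need; path concatenation only gives the upper estimate $index_{q,x}(z)\leq index_{q,x}(y)+index_{q,y}(z)$, exactly as you observe. In the Euclidean half-disk examples of Section~5 the claim holds for a geometric reason---the $p$-closure of a point to the right of a boundary stays to the right of it---but in the abstract typed setting this does not follow from the stated hypotheses alone. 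Your diagnosis that ``the real work'' lies in this transfer is accurate; the paper simply declares the result obvious and moves on without doing that work.
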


The set $B_1$ looks like $q\vdash tr(x)$, except for no starting point $x$. For that reason,  
we first identify a "reference point" $x_2$, which serves in the same role as $x$ for $A_1$. Then, we define $k_2, A_2, B_2, C_2, K_2$, just like defining $k_1, A_1, B_2, C_1, K_1$.

\begin{definition}\label{DefReferencePoint}
Let everything be as in Definition \ref{pqIndex}. Set $D=q\vdash Track_{k_1+1}(x)$.
A point $x_2\in B_1$ is called a reference point, if it satisfies   
(1) $x_2\in port_p(D)$; and (2)
$|p\vdash Track (x_2)| = max\{|p\vdash Track (y)|: y\in D\}$.
$\Box$
\end{definition}

\begin{example}
In Figure \ref{qpstraight} (b), the set $D=q\vdash Track_{k_1+1}(x)=\{(9,10), (10,3)\}$. Hence $port_p(D)=\{(9,10), (10,3)\}$. 
Since $|p\vdash tr((9,10))|=4$ and $|p\vdash tr((10,3))| = 5$, we set $x_2=(10,3)$.

It is worth noting that if the set $D=q\vdash Track_{k_1+1}(x)=\emptyset$, then $q\vdash tr(x) = q\vdash CL_{k_1}(x)$ and $B_1=\emptyset$. In the case of Figure \ref{qpstraight} (b), if we shift all red points to the right one unit, then 
$D=q\vdash Track_{k_1+1}(x)=\emptyset$.
$\Box$
\end{example}

After choosing the reference point $x_2$ for $B_1$, 
the set $p\vdash tr(x_2)$ serves as the extension of $p\vdash tr(x)$ inside $q\vdash tr(x)$. 

We define a few notations, which is similar to Definition \ref{pqIndex}.

\begin{definition}\label{indexingNotation}
Let everything be as in Definition \ref{pqIndex}.
\begin{enumerate}
\item Set $k_2=min\{k: k>k_1, and~ (p\vdash tr(x_2))\subseteq (q\vdash CL_k(x))\}$;
\item set $i^2_1    =max\{i: (p\vdash CL_i(x_2))\subseteq (q\vdash CL_{k_1+1}(x))\}$, and inductively set
          $i^2_{j+1}=max\{i: (p\vdash CL_i(x_2))\subseteq (q\vdash CL_{k_1+j+1}(x))\}$ for $j=2,3,...,k_2$ and $i^2_{j+1}\leq |p\vdash Track (x_2)|$;

\item set $A_2=(q\vdash CL_{k_2}(x))\setminus (A_1\cup K_1)$;
\item set $B_2= (q\vdash tr(x))\setminus (A_1\cup A_2\cup K_1)$;
\item set $C_2= A_2\setminus (p\vdash tr(x_2))$; and
\item set $K_2= p\vdash tr(C_2)$.
\item In addition, 

set $r_2=min\{k: \exists y\in (p\vdash Track_k(x))~such~ that~ x_2\in(q\vdash CL_1(y))\}$. 
\end{enumerate} 
$\Box$
\end{definition}

\begin{theorem}\label{localHorizontalAxis}
Let everything be as in Definition \ref{indexingNotation}. Assume that the cluster $q\vdash tr(x)$ is $(p,q)$-straight. There is a sequence $S$ of $p$-surgeries on the typed topological space $X$, such that after the surgeries, the indexing $index_{p,q,x}(y)$ can be extended to $q\vdash tr(x)$. 

\end{theorem}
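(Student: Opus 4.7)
The plan is to iterate the construction of Theorem \ref{A-indexing}, producing a finite sequence of reference points $x = x_1, x_2, \ldots, x_n$ together with associated surgeries, and to take $S$ to be the concatenation of all surgeries produced at each stage.

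First I would invoke Theorem \ref{A-indexing} at the initial reference point $x = x_1$ to obtain a sequence $S_1$ of $p$-surgeries after which $index_{p,q,x}$ is defined on $p\vdash tr(x) \cup A_1 \cup K_1$. If $B_1 = \emptyset$ we are done, since then $q\vdash tr(x) = A_1$. Otherwise, pick the reference point $x_2 \in B_1$ as prescribed by Definition \ref{DefReferencePoint}, and assign $index_{p,q,x}(x_2)$ using the parameter $r_2$ of Definition \ref{indexingNotation}: choose any $y \in p\vdash Track_{r_2}(x)$ with $x_2 \in q\vdash CL_1(y)$, and set $index_{p,q,x}(x_2)$ to be the next available tick just after $index_{p,q,x}(y)$ on the combined axis coming from $q\vdash Track_{k_1+1}(x)$. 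The $(p,q)$-straightness of $q\vdash tr(x)$ is exactly what guarantees that this value is independent of the auxiliary choice of $y$.

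With $x_2$ now indexed, I would rerun Theorem \ref{A-indexing} with $x_2$ playing the role of $x$. The $(p,q)$-uniform typing ensures the analogous objects $k_2, A_2, B_2, C_2, K_2$ of Definition \ref{indexingNotation} are well defined, and the lemma immediately preceding Definition \ref{DefReferencePoint} gives $p\vdash tr(x_2) \subseteq B_1$, so the resulting surgeries $S_2$ touch only least neighborhoods of points in $B_1$ and do not disturb the indexing already placed on $A_1 \cup K_1$. After $S_2$, the indexing extends to $A_2 \cup K_2$. I would then iterate on $B_2$, picking $x_3$ via the same criterion, producing surgeries $S_3$, and so on. Because $X$ is finite and $|B_{i+1}| < |B_i|$ at every step, the process terminates at some stage $n$ with $B_n = \emptyset$. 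Setting $S := S_1 \cup S_2 \cup \cdots \cup S_n$ yields the required sequence of $p$-surgeries, and $index_{p,q,x}$ is now defined on the whole of $q\vdash tr(x)$.

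The main obstacle is verifying coherence between the stages: in particular, that $(p,q)$-straightness is strong enough to pin down a unique well-defined tick value for $x_{i+1}$ regardless of which witness $y$ is used, and that surgeries performed at later stages cannot disrupt earlier indexing. For the first point, $(p,q)$-straightness forces the $p$-axis to be order-preserving inside every $q$-track, so all admissible witnesses assign the same combined index to $x_{i+1}$. For the second, each $S_i$ modifies only least neighborhoods of points in $p\vdash tr(x_i)$ and its surrounding tree, which by induction lies in $B_{i-1}$ and is disjoint from every $A_j \cup K_j$ with $j < i$. Modulo these checks, each step is a direct application of Theorem \ref{A-indexing}, and the theorem follows by induction on $|q\vdash tr(x)|$.
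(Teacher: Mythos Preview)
Your proposal is correct and follows essentially the same approach as the paper: iterate Theorem~\ref{A-indexing} over successive reference points $x=x_1,x_2,\ldots$, assign an index to each new $x_{i+1}$ via the parameter $r_{i+1}$, extend the indexing over $A_{i+1}\cup K_{i+1}$, and terminate by finiteness. The only notable difference is that the paper fixes the explicit formula $index_{p,q,x}(x_2)=r_2+i^1_{k_1}$ (justified heuristically as ``approximately one $q$-step past $p\vdash Track_{r_2}(x)$''), whereas you describe the value qualitatively and invoke $(p,q)$-straightness to argue well-definedness; your additional remarks on coherence between stages are more careful than what the paper actually writes, but the underlying strategy is identical.
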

\begin{proof}
We first decide the index of the reference point $x_2$. 
 By definition of $r_2$, we consider the point $x_2$ is away from $p\vdash Track_{r_2}(x)$ by a distance of the length of $q\vdash CL_1(y)$ for some $y\in (p\vdash Track_{r_2}(x))$. 
 Approximately, the length of the set $q\vdash CL_1(y)$ is about $i^1_{k_1}$, since 
$x_2\in (q\vdash Track_{k_1+1}(x))$, and 
 $i^1_{k_1}$ is the length of that segment of $p\vdash tr(x)$ inside $q\vdash Track_{k_1}(x)$. For that reason, we set
  $$
  index_{p,q,x}(x_2)=r_2+i^1_{k_1}.
  $$

For points inside the set $A_2$, we repeat the process described in Theorem \ref{A-indexing}, which will index $A_2\cup K_2$ properly. 

The process continues to choose a reference point $x_3\in B_2$, index $x_3$ just as indexing $x_2$, and repeat Theorem \ref{A-indexing} again. It will stop until all points inside $q\vdash tr(x)$ are indexed.
\end{proof}

\section{Branches}

 In this section, we define "branches" inside $p\vdash tr(x)$. Then, we define the types of $left-r$, $right-r$ and equip data sets in $R^2$ with those types of open neighborhoods.  We will use an example to demonstrate how to calculate those terminologies defined in previous sections.

\begin{definition}\label{verticalLine}
Let $(X, \mathcal T, P, \leq, \{\sigma_x: x\in X\})$ be a typed topological space. Let $x\in X$ and $p\in P$. 
Each $p\vdash Track_i(x)$ can be partitioned into disjoint type-$p$-connected subsets, i.e., $p\vdash Track_i(x) = C_i^1\cup C_i^2\cup...\cup C_i^{t_i}$, where $C_i^j$ is type-$p$-connected, and the union of two or more $C_i^j$'s is not type-p-connected.
$\Box$
\end{definition}

For the set $p\vdash tr(x)$ and any subset $A\subseteq (p\vdash tr(x))$,
    set $A_i=(p\vdash Track_i(x))\cap A$.

\begin{definition}\label{branch}
Let $X$ be a typed topological space. For any $x\in X$ and $p\in P$,   
a subset $A\subseteq (p\vdash tr(x))$ is called a $branch$ of $p\vdash tr(x)$, if 
  (1) for each $i\leq |p\vdash Track(x)|$,  either $A_i=\emptyset$ or $A_i=C_i^j$  holds true for some $j$; 
  (2) if $A_i=\emptyset$ then $A_j=\emptyset$ is true for all $j>i$;  and
  (3) when both $A_i$ and $A_{i+1}$ are not empty, the statement $A_{i+1}\cap (p\vdash CL_1(A_i))\neq\emptyset$ is also true.  
\end{definition}

In previous sections, we mentioned a method to create $new$ types of open sets for 
a finite data set in $R^2$ with Euclidean distance $d(x,y)=\sqrt{(x_1-x_2)^2+(y_1-y_2)^2}$, where $x=(x_1, y_1), y=(x_2, y_2)$. The purpose for doing that is to obtain non-symmetrically typed spaces from $R^2$. 
  Several concepts, such as tracks, branches, $port_p(D)$, surrounding tree, can be calculated for data sets in $R^2$.

\begin{definition}
Let $X$ be a finite data set $R^2$. 
 For any point  
 $x = (a, b)\in X$, the left-r neighborhood, denoted $L(r, x)$, and the right-r neighborhood, denoted $R(r,x)$, are defined as follows.
 \begin{enumerate}
 \item  $L(r,x)=\{y=(c,d)\in X: d(x, y) \leq r ~and~ c\leq a\}$; and
 \item $R(r,x)=\{y=(c,d)\in X: d(x, y) \leq r ~and~ c\geq a\}$.

 \end{enumerate}
$\Box$
\end{definition}
 
 We include the boundary in the definition of the half neighborhoods, because we are dealing with finite sets. The following theorem shows that above definitions of $L(r,x)$ and $R(r,x)$ are justified.

\begin{theorem}\label{boundaryOpen} 
 Let  $X=\{(x_1, y_1), (x_2, y_2),..., (x_m, y_m)\}$ be a data set in $R^2$. For any $r>0$,  there exists a positive real number $\epsilon$, such that the following holds for all $x\in X$.
\begin{enumerate}
 \item  $L(r,x)= \{y=(c,d)\in X: d(x, y) < (r+\epsilon) ~and~ c< (a+\epsilon)\}$; 
 \item $R(r,x) = \{y=(c,d)\in X: d(x, y) < (r+\epsilon) ~and~ c> (a-\epsilon)\}$. 
\end{enumerate} 

\end{theorem}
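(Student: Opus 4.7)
The plan is to exploit the finiteness of $X$ to produce a positive spectral gap that can serve as $\epsilon$. The theorem is essentially a statement that, on a finite point set, a closed half-disk agrees with a slightly enlarged open half-disk provided the enlargement is smaller than every nontrivial gap that appears in the data.

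First, I would form two finite sets of real numbers associated to $X$. Let
$D = \{\, d(x,y) : x,y \in X,\ d(x,y) > r \,\}$
and
$A = \{\, |c - a| : x = (a,b),\ y=(c,d) \in X,\ c \neq a \,\}$.
Both are finite subsets of $(0, \infty)$ (with $D$ being a finite subset of $(r, \infty)$). If $D$ is nonempty, set $\epsilon_1 = \min D - r > 0$; otherwise let $\epsilon_1 = 1$. If $A$ is nonempty, set $\epsilon_2 = \min A > 0$; otherwise let $\epsilon_2 = 1$. Then define $\epsilon = \min(\epsilon_1, \epsilon_2) > 0$. This single $\epsilon$ will work uniformly for every $x \in X$ and for both (1) and (2).

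Second, I would verify the two set equalities, which split into four inclusions. Both forward inclusions ($L(r,x)$ and $R(r,x)$ contained in the corresponding open description) are immediate because $r < r+\epsilon$ and $a < a+\epsilon$, $a > a-\epsilon$. For the reverse inclusions, the choice of $\epsilon$ is exactly what is needed: if $y=(c,d)\in X$ satisfies $d(x,y) < r + \epsilon \leq r + \epsilon_1$, then $d(x,y)$ cannot lie in $D$ (otherwise it would be at least $r + \epsilon_1$), forcing $d(x,y) \leq r$. Likewise $c < a + \epsilon \leq a + \epsilon_2$ prevents $c - a$ from being a positive element of $A$, forcing $c \leq a$, which gives (1); and $c > a - \epsilon$ symmetrically prevents $a - c$ from being a positive element of $A$, forcing $c \geq a$, which gives (2).

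There is no real obstacle here: the whole content is that a finite set of real numbers avoiding a value $v$ must stay a positive distance away from $v$, applied to the two finite datasets of pairwise distances and first-coordinate differences. The only minor subtlety is the degenerate case where $D$ or $A$ is empty, which is handled by choosing any positive fallback value for $\epsilon_1$ or $\epsilon_2$, and the fact that a single $\epsilon$ suffices for all $x\in X$ simultaneously because $D$ and $A$ are built globally from the whole set $X\times X$ rather than depending on $x$.
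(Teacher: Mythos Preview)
Your proposal is correct and follows essentially the same approach as the paper: both arguments define $\epsilon$ as the minimum of the smallest distance gap beyond $r$ and the smallest nonzero first-coordinate difference, then use this gap to force the strict inequalities down to the non-strict ones. Your version is slightly tidier in that it handles the empty-set edge case explicitly and uses $|c-a|$ so that a single $\epsilon$ covers both (1) and (2) at once, whereas the paper treats (2) by a separate $\epsilon'$ and then takes a final minimum.
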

\begin{proof}
For each $x=(a, b)\in X$, 

set $U(x)=\{y\in X: d(x,y)>r\}$, and

set $V(x)=\{y=(c,d)\in X: a<c\}$.

\noindent Further, 

set $r_1=min \{d(x,y)-r: ~x\in X, y\in U(x)\}$, and 

set $r_2=min \{c-a: a<c,  x\in X, y\in V(x)\}$.

\noindent Finally, set $\epsilon=min\{r_1, r_2\}$. 
\\

If  $y=(c,d)\in X$ satisfying $d(x,y)<r+\epsilon$ and 
$c<a+\epsilon$, then either (i) $d(x,y)\leq r$ or (ii) $r< d(x,y)< r+\epsilon$. If (ii) is true, then by the definition of $U(x)$, we have $y\in U(x)$ and $d(x, y)-r\geq r_1\geq \epsilon$. Hence $d(x,y)\geq r+\epsilon$, which is a contradiction with (ii). Therefore, we must have (i) $d(x, y)\leq r$. 

Similarly, when $c<a+\epsilon$, we have either (iii) $c\leq a$, or (iv) $a<c<a+\epsilon$. If (iv) holds true, then by the definition of $V(x)$, we have $y\in V(x)$, which means that $c-a\geq r_2\geq\epsilon$. Hence $c\geq a+\epsilon$, which contradicts with (iv). Therefore, we must have (iii) $c\leq a$.
   
Therefore, $y\in L(r,x)$.

Using the same argument, we can find a $\epsilon'$ satisfying item (2). Choosing the smallest of $\epsilon$ and $\epsilon'$ and use that as the requested $\epsilon$ in the statement of the theorem.

\end{proof}

\begin{theorem}\label{th5-5}
Let $X=\{(x_1,y_1), (x_2, y_2), ..., (x_m, y_m)\}$ be a finite data set in $R^2$, and set $\gamma=min\{d(x,y): x\neq y\in X\}$.  Let $P$ be the set of 
$\{left-r, right-r: r = \gamma, 2\gamma, ...,  2n* \gamma \}$. Each point $x\in X$ is equipped with left-r neighborhoods $L(r,x)$'s and right-r neighborhoods $R(r,x)$'s, where $r\in P$. If $2n*\gamma\geq max\{d(x,y): x\neq y\in X\}$, the resulting space is symmetrically typed only when $X$ is on a vertical line, i.e.,  for some constant $a$, $x_i=a$ is true for all $i\leq m$.
\end{theorem}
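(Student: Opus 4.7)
The plan is to prove the biconditional hidden in the statement, namely that the space is symmetrically typed (at every type in $P$) if and only if all points share a common first coordinate. First I will unpack what symmetric typing reduces to in this setting: because each point $x\in X$ has exactly one $left$-$r$ neighborhood $L(r,x)$ and one $right$-$r$ neighborhood $R(r,x)$ for each $r$, the $p$-symmetric condition of Definition \ref{symmtricType} collapses, for $p = left\text{-}r$, to the biconditional $y\in L(r,x)\Leftrightarrow x\in L(r,y)$, and similarly for $R(r,\cdot)$.

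For the easy direction, assume $X$ lies on the vertical line $\{x = a\}$. Then any two points have the form $x = (a,b_1)$, $y = (a,b_2)$. The defining condition $y\in L(r,x)$ becomes $d(x,y)\le r$ together with $a\le a$, which trivially holds both ways; hence $y\in L(r,x)$ iff $d(x,y)\le r$ iff $x\in L(r,y)$. The argument for $R(r,\cdot)$ is identical. So the space is $p$-symmetrically typed for every $p\in P$.

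For the converse, suppose $X$ is not on a vertical line. Then there exist $x = (a,b)$ and $y=(c,d)$ in $X$ with $a\ne c$; after relabeling assume $a < c$. Because $r$ runs through $\{\gamma, 2\gamma, \dots, 2n\gamma\}$ with $2n\gamma\ge \max\{d(u,v): u\ne v\in X\}$, there is some $r_0\in P$ with $r_0 \ge d(x,y)$. For the type $p = left\text{-}r_0$, the inclusion $y\in L(r_0,x)$ would require $c\le a$, which fails; but $x\in L(r_0,y)$ requires $a\le c$ and $d(x,y)\le r_0$, both of which hold. Hence $y\notin L(r_0,x)$ while $x\in L(r_0,y)$, violating symmetric typing at $p = left\text{-}r_0$.

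The whole argument is essentially a bookkeeping exercise on the definitions of $L(r,\cdot)$ and $R(r,\cdot)$, so there is no serious obstacle. The only point that requires a small amount of care is ensuring the failure of symmetry is witnessed by a type actually lying in $P$ rather than by some arbitrary real radius; this is precisely where the hypothesis $2n\gamma\ge\max\{d(u,v):u\ne v\}$ is used, to guarantee a suitable $r_0\in P$ with $r_0\ge d(x,y)$.
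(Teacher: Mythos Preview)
Your argument is correct and follows essentially the same approach as the paper: both exploit that for the type $p=\text{left-}r$ each point has the unique $p$-neighborhood $L(r,x)$, so the symmetry condition reduces to comparing membership in $L(r,x)$ versus $L(r,y)$, and the strict inequality $a<c$ together with a radius $r_0\ge d(x,y)$ in $P$ yields the required violation. You additionally prove the converse (vertical line $\Rightarrow$ symmetrically typed), which the paper does not spell out since the statement only asserts ``only when''; this extra direction is harmless and your treatment of it is fine.
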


\begin{proof}
It is easy to see that $X$ equipped with the left-r and right-r open neighborhoods is a typed topological space. The space $X$ is not symmetrically typed. 

Let $p$ be a type of  left-$r\in P$. If $y\in L(r, x)$ for two distinct points $x=(a,b)$ and $y=(c,d)$, then $d(x,y)\leq r$ and $c\leq a$. By definition of $p$, we have $p\vdash U_{min}(x) = L(r, x)$ and $x\in(p\vdash CL_1(y))$. 

For the same $x$ and $y$, the statement $x\in L(r, y)$ is true only when $c=a$, i.e., they are on the same vertical line.
   When $2n*\gamma\geq max\{d(x,y): x\neq y\in X\}$, that conclusion applies to any pair of points $x,y$. Therefore, the space is symmetrically typed only when $X$ is on a same vertical line. 

In addition, for any point $x=(a,b)$ and $p=left-r$, the set $p\vdash CL_1(x)$ is the right half disk with the center at $x$, i.e., $p\vdash CL_1(x) = \{y=(c,d)\in X: d(x,y)\leq r, a\leq c\}$. See
Figure \ref{2DDatasetNonSymTyped} for an example.
\end{proof}

\begin{remark}
For a data set $X\subseteq R^2$, the topology using $left-r$ and $right-r$ types of open neighborhood on $X$ is not the same as the subspace topology from the usual topology on $R^2$. One can consider such a topology as an addition to the subspace topology. $\Box$
\end{remark}

\begin{figure}
  \includegraphics[width=4in]{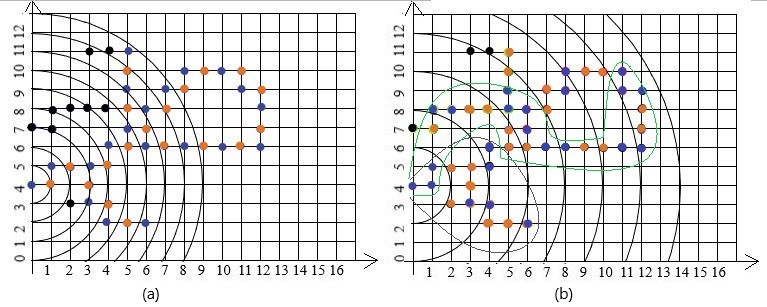}
  \caption{2D data set with non symmetrically typed topology}
  \label{2DDatasetNonSymTyped}
  \end{figure}

In the following, we will use an example to demonstrate how to calculate, for a given point $x$, all tracks, type-$p$-connected decomposition of each track, and all possible branches.

\begin{table}[ht]
 \noindent \caption{Tracks of $p\vdash Track_i(x)$ and $q\vdash Track_i(x)$}
\begin{tabular} {|l |p{5.5cm} |p{5.5cm}|}
\hline
i & $p\vdash Track_i(x)=C_i^1\cup C_i^2\cup ...$ & $q\vdash Track_i(x)=C_i^1\cup C_i^2\cup ...$\\
\hline
1 & $\{(1,4)\}$                   & $\{(1,4), (1,5)\}$\\  \hline
2 & $\{(1,5)\}$					  & $\{(1,7)\}\cup\{(2,3),(2,5),(3,4),$ $(3,5)\}$ \\	\hline
3 & $\{(2,5)\}$					 &  $\{(1,8),(2,8)\}\cup\{(3,3),(4,3),(4,5),(4,6)\}$ \\	\hline
4 & $\{(3,5)\}$					&   $\{(3,8),(4,8),(5,6),(5,7),(6,6)\}\cup\{(4,2),(5,2)\}$ \\	\hline
5 & $\{(3,4)\}\cup\{(4,5)\}$	&   $\{(5,8),(5,9),(6,8),(6,7),$ $(7,6), (8,6)\}\cup\{(6,2)\}$ \\	\hline
6 & $\{(3,3)\}\cup\{(4,6)\}$	& 	$\{(5,10),(5,11)\}\cup\{(7,8),(7,9)\}\cup\{(9,6), (10,6)\}$ \\	\hline
7 & $\{(4,3)\}\cup\{(5,6)\}$	&	$\{(8,9),(8,10)\}\cup\{(11,6),(12,6)\}$ \\	\hline
8 & $\{(4,2)\}\cup\{(5,7)\}\cup\{(6,6)\}$				&  $\{(9,10),(10,10)\}\cup\{(12,7),(12,8)\}$ \\	\hline
9 & $\{(5,2)\}\cup\{(5,8)\}\cup\{(6,7)\}\cup\{(7,6)\}$	&  $\{(11,9),(11,10),(12,9)\}$ \\	\hline
10 & $\{(6,2)\}\cup\{(5,9)\}\cup\{(6,8)\}\cup\{(8,6)\}$	&   \\	\hline
11 & $\{(5,10)\}\cup\{(7,8)\}\cup\{(9,6)\}$		&		\\	\hline
12 & $\{(5,11)\}\cup\{(7,9)\}\cup\{(10,6)\}$		&		\\	\hline
13 & $\{(8,9)\}\cup\{(11,6)\}$						&	\\	\hline
14 & $\{(8,10)\}\cup\{(12,6)\}$						&	\\	\hline
15 & $\{(9,10)\}\cup\{(12,7)\}$						&	\\	\hline
16 & $\{(10,10)\}\cup\{(12,8)\}$					&		\\	\hline
17 & $\{(11,10)\}\cup\{(12,9)\}$						&	\\	\hline
18 & $\{(11,9)\}$								&		\\	\hline	
\end{tabular}
\label{table1}
\end{table}

\begin{example}
Using Figure \ref{2DDatasetNonSymTyped}, the $X=\{(0,4),(0,7), (1,4), (1,5),(1,7),(1,8),$
$(2,3),$ $(2,5),(2,8),(3,3),(3,4),(3,5),(3,8),$ 
$(3,11),$
$(4,2),$
$(4,3),(4,5),(4,6),(4,8),$
$(4,11),$ $(5,2),(5,6),(5,7),(5,8),(5,9),(5,10),(5,11),$
$(6,2),(6,6),(6,7),(6,8),(7,6),$
$(7,8),$ $(7,9),(8,6),(8,9),(8,10),(9,6),(9,10),(10,6),$
$(10,10),$
$(11,6),(11,9),(11,10),$
$(12,6),$ $(12,7),(12,8),(12,9)\}$.

Figure \ref{2DDatasetNonSymTyped}(a) demonstrates the data set $X$ with ideal tracks $p\vdash Track_i(x)$ for the point $x=(0,4)$ and $p=left-1$. The actual tracks are displayed alternately in either red and blue. Points in dark color are not inside $p\vdash tr(x)$. 
   For instance, 
   
   $p\vdash Track_1 (x)=\{(1,4)\}$, 
   
   
   
   
   $p\vdash Track_5 (x)=\{(3,4), (4,5)\}$,   etc.
    
    The points $(2,3),(0,7), (1,7), (1,8), (2,8), (3,8), (3,11), (4,8), (4,11) $ are not inside $p\vdash tr(x)$.
 
Figure \ref{2DDatasetNonSymTyped}(b) demonstrates the data set $X$ with ideal tracks 
of $q\vdash Track_i(x)$, where $q=left-2$. The actual tracks are displayed alternately in either red and blue. Points in dark color are not inside $q\vdash tr(x)$.
   For instance, 
   
   $q\vdash Track_1 (x)=\{(1,4), (1,5)\}$, 
 
   
   

    $q\vdash Track_5 (x)=\{(5,8), (5,9), (6,2), (6,7), (6,8), (7,6), (8,6)\}$,   etc. 
    
    The points $(0,7), (3,11), (4,11) $ are not inside $q\vdash tr(x)$.
$\Box$
\end{example}

Table \ref{table1} shows each $p\vdash Track_i(x)$ and $q\vdash Track_i(x)$ for $x=(0,4)$, and their partitions into $C_i^j$'s.

The following lists all branches in Figure 2(a) with starting point $x=(0,4)$ and type $p=left-1$.
It is in the format of $\{C_1^{j_1}, C_2^{j_2}, ...., C_{k}^{j_{k}}\}$ with $k\leq 18$.
\begin{enumerate}

\item $\{\{(0,4)\}, \{(1,4)\}, \{(1,5)\}, \{(2,5)\}, \{(3,5)\}, \{(3,4)\}, \{(3,3)\}, \{(4,3\}), \{(4,2)\},$ $ \{(5,2)\}, \{(6,2)\}\}$.

\item $\{\{(0,4)\}, \{(1,4)\}, \{(1,5)\}, \{(2,5)\}, \{(3,5)\}, \{(4,5)\}, \{(4,6)\}, \{(5,6\}), \{(5,7)\},$ $ \{(5,8)\}, \{(5,9)\}, \{(5,10\}, \{(5,11)\}\}$.

\item 
$\{\{(0,4)\}, \{(1,4)\}, \{(1,5)\}, \{(2,5)\}, \{(3,5)\}, \{(4,5)\}, \{(4,6)\}, \{(5,6\}), \{(5,7)\},$ $ \{(5,8)\}, \{(6,8)\}, \{(7,8\}, \{(7,9)\}, \{(8,9)\}, \{(8,10)\}, \{(9,10)\}, \{(10,$ $10)\},$   $\{(11,$  $10)\},$  $\{(11,9)\}\}$.

\item
$\{\{(0,4)\}, \{(1,4)\}, \{(1,5)\}, \{(2,5)\}, \{(3,5)\}, \{(4,5)\}, \{(4,6)\}, \{(5,6\}), \{(5,7)\},$ $ \{(6,7)\}, \{(6,8)\}, \{(7,8\}, \{(7,9)\}, \{(8,9)\}, \{(8,10)\}, \{(9,10)\}, \{(10,$ $10)\},$   $\{(11,$  $10)\},$  $\{(11,9)\}\}$.

\item 
$\{\{(0,4)\}, \{(1,4)\}, \{(1,5)\}, \{(2,5)\}, \{(3,5)\}, \{(4,5)\}, \{(4,6)\}, \{(5,6\}), \{(6,6)\},$ $ \{(6,7)\}, \{(6,8)\}, \{(7,8\}, \{(7,9)\}, \{(8,9)\}, \{(8,10)\}, \{(9,10)\}, \{(10,$ $10)\},$   $\{(11,$  $10)\},$  $\{(11,9)\}\}$.

\item 
$\{\{(0,4)\}, \{(1,4)\}, \{(1,5)\}, \{(2,5)\}, \{(3,5)\}, \{(4,5)\}, \{(4,6)\}, \{(5,6\}), \{(6,6)\},$ $ \{(7,6)\}, \{(8,6)\}, \{(9,6\}, \{(10,6)\}, \{(11,6)\}, \{(12,6)\}, \{(12,7)\}, \{(12,$ $8)\},$   $\{(12,$  $9)\}$.
\end{enumerate}

The following lists branches in Figure 2(b) with starting point $x=(0,4)$ and type $q=left-2$.
It is in the format of $\{C_1^{j_1}, \underline{C_2^{j_2}}, C_3^{j_3}, \underline{C_4^{j_4}},...., C_{k}^{j_{k}}\}$ with $k\leq 9$. 

\begin{enumerate}

\item $\{\{(0,4)\}, 
\underline{	\{(1,4), (1, 5)\} }, $
$	\{(1,7)\},
\underline{	\{(1,8), (2, 8)\} },$
$    \{(3,8),(4,8), (5,7), (5,6),$ $(6,6)\},$
$ \underline{   \{(5,9), (5, 8), (6, 8), (6,7), (7,6), (8,6) \} },$
$    \{(9,6), (10,6)\},$
$ \underline{   \{(11,6), (12, 6)\} },$
$    \{(12,7),$ $(12, 8)\}, $
$  \underline{(11,9), (11,10),   \{(12,9)\} }$
$    \}$. See the top enclosed area in Figure 2(b).

\item $\{\{(0,4)\}, 
	\underline{\{(1,4), (1, 5)\}}, $
$	\{(1,7)\},
	\underline{\{(1,8), (2, 8)\}}, $
$    \{(3,8),(4,8), (5,7), (5,6), (6,6)\},$
$   \underline{ \{(5,9), (5, 8), (6, 8), (6,7), (7,6), (8,6) \}},$
$    \{(7,9), (7,8)\},$
$   \underline{ \{(8,10), (8, 9)\}},$
$    \{(9,10),$ $(10, 10)\}, $
$  \underline{  \{(11,9), (11,10), (12, 9)\}}
    \}$.

\item $\{\{(0,4)\}, 
\underline{\{(1,4), (1, 5)\}}, 
	\{(1,7)\},$
$\underline{	\{(1,8), (2, 8)\}},$ 
$   \{(3,8),(4,8), (5,7), (5,6), (6,6)\},$
$\underline{    \{(5,9), (5, 8), (6, 8), (6,7), (7,6), (8,6) \}},$
$    \{(5,11), (5,10)\}
    \}$.

\item $\{\{(0,4)\},
	 \underline{\{(1,4), (1, 5)\}},$
$	  \{(2,3), (2,5), (3,4), (3,5)\},$ 
 $    \underline{\{(3,3), (4,6), (4,5), (4,3)\}},$
$      \{(4,2),(5,2)\}, 
      \underline{\{(6,2)\}}\}$. See the lower enclosed area in Figure 2(b) 
   
\item $\{\{(0,4)\},
	\underline{ \{(1,4), (1, 5)\} }, $
$	\{(2,3), (2,5), (3,4), (3,5)\}, $
$    \underline{\{(4,6), (4,5), (4,3),(3,3)\} }, $
$    \{(3,8),(4,8), (5,7), (5,6), (6,6)\},$
$   \underline{ \{(5,9), (5, 8), (6, 8), (6,7), (7,6), (8,6) \} },$
$    \{(9,6),$  $(10,6)\},$
$    \underline{\{(11,6), (12, 6)\} },$
$    \{(12,7), (12, 8)\}, 
   \underline{ \{(11,9), (11, 10),(12,9)\}}
    \}$.

\item $\{\{(0,4)\}, 
	\underline{\{(1,4), (1, 5)\} }, $
$	\{(2,3), (2,5), (3,4), (3,5)\}, $
$  \underline{ \{(4,6), (4,5), (4,3),(3,3)\} }, $
$   \{(3,8),(4,8), (5,7), (5,6), (6,6)\},$
$  \underline{  \{(5,9), (5, 8), (6, 8), (6,7), (7,6), (8,6) \} },$
$    \{(7,9),$ $(7,8)\},$
$  \underline{  \{(8,10), (8, 9)\} },$
$    \{(9,10), (10, 10)\}, $
$ \underline{   \{(11,10), (11, 9), (12,9)\} }
    \}$.

\item $\{\{(0,4)\}, 
\underline{	\{(1,4), (1, 5)\} }, $
$	\{(2,3), (2,5), (3,4), (3,5)\}, $
$ \underline{   \{(4,6), (4,5), (4,3),(3,3)\} },$ 
$    \{(3,8),(4,8), (5,7), (5,6), (6,6)\},$
$ \underline{   \{(5,9), (5, 8), (6, 8), (6,7), (7,6), (8,6) \} },$
$    \{(5,11),$ $(5,10)\},
    \}$.

\end{enumerate}

The left-r and right-r types provide a direction of clustering from left to right. The space $X$ is straight.

Using Table \ref{table1}, one can verify that $(p\vdash Track_5(x))\cap (q\vdash Track_2(x))\neq\emptyset$ and $(p\vdash Track_5(x))\cap (q\vdash Track_3(x))\neq\emptyset$. Hence $X$ is 
not $(p,q)$-uniformly typed. The other two violations are (1) $p\vdash Track_7(x)$ with $q\vdash Track_3(x)$ and $q\vdash Track_4(x)$, and (2) $p\vdash Track_9(x)$ with $q\vdash Track_4(x)$ and  $q\vdash Track_5(x)$. 

To create a $(p,q)$-uniformly typed space, we can use the enhanced types $up and left-r$, and $up-right-r$, i.e., for a point $x=(a, b)\in X$,

set $UL(r,x) = \{y=(c,d)\in X: d(x,y)\leq r, c\leq a, d\geq b\}$, and

set $UR(r,x) = \{y=(c,d)\in X: d(x,y)\leq r, c\geq a, d\geq b\}$.

When using the $up-left-r$ or $up-right-r$ types, above mentioned violations will be eliminated. It is easy to show that $X$ with the new types will be $(p, q)$-uniformly typed for $p=up-left-1$ and $q=up-left-2$.

{\bf Acknowledgment} The author would like to thank the referee for very valuable suggestions and corrections, which help improve this article a lot.

\bibliographystyle{amsplain}

\end{document}